\def\specialsection{\@startsection{section}{1}%
	\z@{\linespacing\@plus\linespacing}{.5\linespacing}%
{\normalfont}}
\def\section{\@startsection{section}{1}%
\z@{.7\linespacing\@plus\linespacing}{.5\linespacing}%
{\normalfont\scshape\bfseries}}
\tikzset{%
dimen/.style={<->,>=latex,thin,every rectangle node/.style={fill=white,midway,font=\sffamily}},
}
\theoremstyle{plain}
\newtheorem{Theorem}{Theorem}[section]
\newtheorem{Assumption}[Theorem]{Assumption}
\theoremstyle{definition}
\newtheorem{definition}[Theorem]{Definition}
\newtheorem{remark}[Theorem]{Remark}
\newtheorem{lemma}[Theorem]{Lemma}
\newtheorem{corollary}[Theorem]{Corollary}
\newtheorem*{example*}{Example}
\newtheorem*{remark*}{Remark}
\newtheorem*{lemma*}{Lemma}
\newcommand{\diff}{\mathop{}\!\mathrm{d}}
\newcommand{\R}{\mathbb{R}}
\newcommand{\norm}[2]{\left\lvert \left\lvert #1 \right\rvert \right\rvert_{#2}}
\newcommand{\abs}[1]{\left\lvert #1 \right\rvert}
\newcommand{\grad}{\nabla}
\newcommand{\holdall}{\textrm{D}}
\newcommand{\topder}{D_T}
\newcommand{\topderivgen}{\mathcal{D}_T}
\numberwithin{equation}{section}
\begin{document}
\title[A Novel Deflation Approach for Topology Optimization]{A Novel Deflation Approach for Topology Optimization and Application for Optimization of Bipolar Plates of Electrolysis Cells}
\author{Leon Baeck$^{*,1,2}$}
\address{$^*$ Corresponding Author}
\address{$^1$ Fraunhofer ITWM, Departement Transport Processes, Kaiserslautern, Germany}
\email{\href{mailto:leon.baeck@itwm.fraunhofer.de}{leon.baeck@itwm.fraunhofer.de}}
\author{Sebastian Blauth$^1$}
\email{\href{mailto:sebastian.blauth@itwm.fraunhofer.de}{sebastian.blauth@itwm.fraunhofer.de}}
\author{Christian Leith\"auser$^1$}
\email{\href{mailto:christian.leithaeuser@itwm.fraunhofer.de}{christian.leithaeuser@itwm.fraunhofer.de}}
\author{Ren\'e Pinnau$^2$}
\address{$^2$ RPTU Kaiserslautern-Landau, Technomathematics Group, Kaiserslautern, Germany}
\email{\href{mailto:pinnau@tu-kaiserlautern.de}{pinnau@rptu.de}}
\author{Kevin Sturm$^3$}
\address{$^3$ TU Wien, Institute of Analysis and Scientific Computing, Vienna, Austria}
\email{\href{mailto:kevin.sturm@tuwien.ac.at}{kevin.sturm@tuwien.ac.at}}

\begin{abstract}
Topology optimization problems usually feature multiple local minimizers. To guarantee convergence to local minimizers that perform best globally or to find local solutions that are desirable for practical applications due to easy manufacturability or aesthetic designs, it is important to compute multiple local minimizers of topology optimization problems. In this paper, we introduce a novel deflation approach to systematically find multiple local minimizers of general topology optimization problems. The approach is based on a penalization of previously found local solutions in the objective. We validate our approach on the so-called two-pipes five-holes example. Finally, we introduce a model for the topology optimization of bipolar plates of hydrogen electrolysis cells and demonstrate that our deflation approach enables the discovery of novel designs for such plates.

\bigskip
\noindent \textsc{Keywords. } Numerical optimization, Topology optimization, Deflation, Level-set method, Electrolysis

\bigskip
\noindent \textsc{AMS subject classifications. } 65K05, 49M41, 35Q93, 65K10, 90C26
\end{abstract}

{\noindent\footnotesize This is a post-peer-review, pre-copyedit version of an article published in SIAM Journal on Scientific Computing. The final version is available online at \url{https://doi.org/10.1137/24M1670913}.
}

\maketitle

\section{Introduction}
\label{sec:introduction}

Topology optimization considers the optimization of a cost functional by changing the geometric properties of a domain by either adding or removing material. It was initially introduced in the context of linear elasticity in \cite{Eschenauer1994Topology}. Since then, topology optimization has found applications in various fields such as compliance minimization in linear elasticity \cite{Eschenauer1994Topology, Allaire2005Structural, Amstutz2006new}, design optimization in fluid mechanics \cite{Borrvall2003Topology, NSa2016Topological}, electrical machines \cite{Gangl2012Topology}, the solution of inverse problems \cite{Hintermueller2008Electrical}, as well as hydrogen electrolysis cells \cite{Baeck2023Topology, Baeck2024ECMI}.

Mathematically, the topology of a material can be described using different approaches, including density approaches \cite{Allaire1997Homogenization, Bendsoe1989Density, Bendsoe2004Topology, Borrvall2003Topology} and level-set functions \cite{Allaire2002Levelset, Allaire2004Structural, Amstutz2006new, Sethian2000Levelset}. In this work, we choose to represent our material distribution using a level-set function and employ the popular approach introduced by Amstutz and Andr\"a \cite{Amstutz2006new} that utilizes the topological derivative. In this approach the level-set function is updated iteratively using a linear combination of itself and the generalized topological derivative. For more information on established level-set methods and novel quasi-Newton methods for topology optimization, we refer to \cite{Blauth2023Quasi}.

The topological derivative was first introduced in \cite{Eschenauer1994Topology} as the bubble-method and then later mathematically justified in \cite{Sokolowski1999Topology, Garreau2001Topology} in the context of linear elasticity. The topological derivative measures the sensitivity of a shape functional with respect to infinitesimal topological changes. It has been established for a wide range of partial differential equation (PDE) constrained shape functionals \cite{Novotny2013Topological, Novotny2020Topology}. Various methods exist for computing the topological derivative, including a direct approach \cite{Novotny2013Topological} and Lagrangian approaches using adjoint equations \cite{Sturm2020Topology, Gangl2020Topology}. For an overview of established Lagrangian methods for topological derivatives, we refer to \cite{Baumann2022Topology}.

It is known that topology optimization problems can attain multiple local minimizers, even in problems with a linear governing PDE (Sec. 4.5 of \cite{Borrvall2003Topology}). A popular example for the existence of multiple local solutions is the minimization of the energy dissipation of a fluid that is described by the Stokes flow through a pipe (Sec. 4.5 of \cite{Borrvall2003Topology}). The five-holes double-pipe problem (Sec. 4.4 of \cite{Papadopoulos2021Topology}), an extension of this example, exhibits even more local minimizers. Various ideas have been proposed to address this issue. One approach is the use of continuation methods to achieve convergence to a minimizer that performs better globally \cite{Stolpe2001Topology}. However, this approach fails even for elementary examples \cite{Stolpe2001Topology}. Another way is the application of the so-called tunneling method for topology optimization problems \cite{Zhang2018Deflation}. In \cite{Papadopoulos2021Topology} a deflated barrier method was introduced to compute multiple local minimizers for density based topology optimization. The deflation concept was originally introduced to find multiple solutions for nonlinear PDEs in \cite{Brow1971Deflation, Farrell2015Deflation, Farrell2019Deflation}.

The computation of multiple local minimizers is important as gradient-based solution algorithms, which are usually used, only converge to local minimizers. By finding multiple local minimizers, one can choose the solution that performs best globally in a post processing step. Additionally, different criteria, such as manufacturability, can be considered when selecting a local minimizer. Therefore, providing multiple local solutions of a topology optimization problem can be beneficial for industrial applications \cite{Doubrovski2011Deflation}. 

In this paper, we propose a novel deflation technique to compute multiple local solutions of topology optimization problems. Our approach is based on penalizing the distance to previously found local minimizers in the objective function, allowing for a systematic discovery of new local solutions. The penalty functions are designed to vanish when the distance between the corresponding shapes exceeds a certain threshold, ensuring that the actual topology optimization problem is solved in this case. Our approach is applicable to general topology optimization problems or even more general optimization problems. We validate that our approach is indeed able to compute multiple local minimizers of topology optimization problems by considering the aforementioned five-holes double-pipe problem from \cite{Papadopoulos2021Topology}.

Moreover, we consider an additional example, the topology optimization of a bipolar plate of a hydrogen electrolysis cell. This model was initially introduced in our previous work \cite{Baeck2023Topology}, where the presence of multiple local minimizers already has been confirmed through numerical tests and parameter studies. Hydrogen electrolysis cells play a crucial role in efficient hydrogen production, which is essential for the transition to climate-neutral industries and transportation. In these cells, water is split into hydrogen and oxygen using (green) electrical energy. Essential for the efficiency of such cells is the flow through the so-called bipolar plate which distributes the water throughout the cell. The water needs to be distributed uniformly over the entire cell to maximize the hydrogen production. To achieve this, we propose a model for the topology optimization of bipolar plates. Then, we apply our deflation approach to find multiple local minimizers for this optimization problem. Moreover, we highlight that our deflation technique is a necessity to find novel bipolar plate designs. For more details regarding hydrogen electrolysis cells, particularly proton exchange membrane (PEM) electrolysis cells, we refer to \cite{Metz2023}.

This paper is structured as follows. In Section \ref{sec:top_opt}, we provide a brief introduction to the basics of topology optimization. In Section \ref{sec:deflation}, we present a novel deflation approach for topology optimization. This approach involves a penalization of previously found local minimizers in the objective function and can be applied to general topology optimization problems. Afterwards, in Section \ref{sec:validation}, we validate the novel deflation technique using the two-pipes five-holes example from literature \cite{Papadopoulos2021Topology}. In Section \ref{sec:model_problem}, we introduce a model for the topology optimization of bipolar plates of hydrogen electrolysis cells. Finally, we employ our presented deflation approach and demonstrate its capability to create novel designs for bipolar plates.

\section{Preliminaries}
\label{sec:top_opt}

\subsection{Topological Sensitivity Analysis}
\label{ssec:top_sensitivity}

We provide a brief overview of the basics of the topological derivative. For more detailed introduction into this topic the reader is referred to \cite{Novotny2013Topological}. For the rest of this section let $\holdall\subset\R^d$ be a bounded domain, referred to as the hold-all domain, where $d\in\mathbb{N}_{>0}$. Let $\mathcal{P}(\holdall)=\{\Omega\subset\R^d:\Omega\subset \holdall \text{ open}\}$ be the set of all open subsets of the hold-all domain $\holdall$. Let $J:\mathcal{P}(\holdall)\rightarrow\R$ be a shape functional. We state the definition of the topological derivative of the shape functional $J$, see \cite{Amstutz2022introduction}.

\begin{definition}
Let $\Omega\in\mathcal{P}(\holdall)$ and $\omega\subset\R^d$ be open, bounded, and simply connected with $0\in\omega$. A shape functional $J:\mathcal{P}(\holdall)\rightarrow\R$ admits a topological derivative at $\Omega\in\mathcal{P}(\holdall)$ and at the point $x\in \holdall\setminus\partial\Omega$ w.r.t. $\omega$ if there exists some positive function $l:\R_{>0}\rightarrow\R_{>0}$ with $\lim_{\epsilon\searrow0}l(\epsilon)=0$ so that the following limit exists
\begin{equation*}
	\topder J(\Omega)(x)=\lim_{\epsilon\searrow0}\frac{J(\Omega_{\epsilon})-J(\Omega)}{l(\epsilon)}.
\end{equation*}
Here, the perturbed domain $\Omega_{\epsilon}$ is defined by
\begin{equation*}
	\Omega_{\epsilon} = \Omega_{\epsilon}(x,\omega) =
	\begin{cases}
		\Omega\setminus\overline{\omega_\epsilon(x)}, \quad &\text{if } x\in\Omega, \\
		\Omega\cup\omega_{\epsilon}(x), \quad &\text{if } x\in \holdall\setminus\overline{\Omega},
	\end{cases}
\end{equation*}
where $w_\epsilon(x)=x+\epsilon\omega$.
\label{def:topological_derivative}
\end{definition}

Thus, the topological derivative measures the sensitivity of a shape functional with respect to infinitesimal topological changes. A typical choice for the shape of the perturbation is the $d$-dimensional unit ball $\omega=B(0,1)$ centered at the origin. The positive function $l$ usually depends on the considered problem, but is often chosen as the volume of the perturbation. The topological derivative can also depend on the perturbation shape $\omega$, which we will omit in this work for simplicity. 

We state the asymptotic topological expansion of a shape functional $J$
\begin{equation}
J(\Omega_{\epsilon})=J(\Omega)+l(\epsilon)\topder J(\Omega)(x)+o(l(\epsilon)).
\end{equation}
A necessary optimality condition for a shape $\Omega\in\mathcal{P}(\holdall)$ follows directly from the asymptotic topological expansion (c.f. \cite{Amstutz2006new})
\begin{equation}
\label{eq:necessary_opt_condition}
\topder J(\Omega)(x)\geq0 \quad \text{for all } x\in \holdall\setminus\partial\Omega.
\end{equation}

Numerically, we describe a set $\Omega\subset \holdall$ using a continuous level-set function $\psi:\holdall\rightarrow \R$, defined as
\begin{equation*}
\psi(x)\begin{cases}
	<0, \quad & x\in\Omega,\\
	=0, \quad & x\in\partial\Omega,\\
	>0. \quad & x\in \holdall\setminus\overline{\Omega}.
\end{cases}
\end{equation*}
In this context, it is natural to define the generalized topological derivative (c.f. \cite{Amstutz2006new}).
\begin{definition}
Let $\Omega\in\mathcal{P}(\holdall)$ be a shape. If the topological derivative $\topder J(\Omega)(x)$ of a shape functional $J:\mathcal{P}(\holdall)\rightarrow\R$ exists for all $x\in \holdall\setminus\partial\Omega$, the generalized topological derivative is defined as 
\begin{equation}
	\topderivgen J(\Omega)(x)=\begin{cases}
		-\topder J(\Omega)(x), \quad &\text{if} \ x\in\Omega, \\
		\topder J(\Omega)(x),  \quad &\text{if} \ x\in \holdall\setminus\overline{\Omega}.
	\end{cases}
	\label{eq:generalized_top_der}
\end{equation}
\label{def:generalized_top_der}
\end{definition}
We rewrite the optimality condition $(\ref{eq:necessary_opt_condition})$ in the context of the generalized topological derivative. A shape $\Omega\in\mathcal{P}(\holdall)$, that is described by a level set function $\psi:\holdall\rightarrow\R$, is locally optimal if there exists some $c>0$ such that
\begin{equation}
\label{eq:optimality_generalized}
\topderivgen J(\Omega)(x)=c\psi(x) \quad \text{for all } x\in \holdall\setminus\partial\Omega.
\end{equation}
This means a necessary optimality conditions is fulfilled when the generalized topological derivative becomes a level set function for the shape $\Omega$ itself. This observation justifies the introduction of the generalized topological derivative and is the starting point of the popular algorithm by Amstutz and Andr\"a \cite{Amstutz2006new}, which we briefly introduce later in Section $\ref{ssec:top_opt_algorithm}$.

\subsection{Volume Constrained Topology Optimization}
\label{ssec:volume_constrained}

As we consider volume constrained topology optimization problems later on, we provide a brief overview of the corresponding necessary optimality condition. We consider the following volume constrained topology optimization problem
\begin{equation}
\label{eq:volume_constrained_top}
\begin{aligned}
	&\min_{\Omega\in\mathcal{P}(D)}J(\Omega) \\
	&\text{s.t.} \quad \ V_L \leq |\Omega| \leq V_U,
\end{aligned}
\end{equation}
where $V_L\in\R$ and $V_U\in\R$ are the lower and upper bounds for the volume of the shape $\Omega$ with $0 \leq V_L\leq V_U \leq |\holdall|$. The necessary optimality condition $(\ref{eq:necessary_opt_condition})$ changes in this volume constrained case. We need an additional assumption about the topological derivative regarding two perturbations.

\begin{Assumption}
Let $\Omega\in \mathcal{P}(\holdall)$ be a shape. Let the topological derivative $\topder J(\Omega)(x)$ of a shape functional $J: \mathcal{P}(\holdall) \rightarrow \R$ exist for all $x\in \holdall\setminus \overline{\Omega}$.
Let the perturbation shape $\omega$ and positive function $l$ be as in Definition $\ref{def:topological_derivative}$. We assume that the shape functional $J$ fulfills the following condition
\begin{equation*}
	\lim_{\epsilon\searrow0}\frac{J(\Omega\setminus\overline{\omega_\epsilon(x)}\cup\omega_\epsilon(\tilde{x})) - J(\Omega)}{l(\epsilon)}=\topder J(\Omega)(x)+\topder J(\Omega)(\tilde{x}),
\end{equation*}
for all $x\in\Omega$ and $\tilde{x}\in \holdall\setminus\overline{\Omega}$.
\label{ass:ass_proof}
\end{Assumption}

\begin{lemma}
\label{lemma:vol_optimality}
Let $\Omega \in \mathcal{P}(\holdall)$ be a shape with $V_L\leq |\Omega| \leq V_U$, where we assume $V_L < V_U$. Let $J$ be the shape functional in $(\ref{eq:volume_constrained_top})$ with generalized topological derivative $\topderivgen J(\Omega)(x)$ for all $x\in \holdall \setminus \overline{\Omega}$. Let Assumption $\ref{ass:ass_proof}$ be fulfilled. Then, the shape $\Omega\in\mathcal{P}(\holdall)$ fulfills a necessary optimality condition for the volume constrained topology optimization problem $(\ref{eq:volume_constrained_top})$ if the following condition holds for all $x\in \holdall\setminus\partial\Omega$
\begin{equation}
	\begin{cases}
		\begin{cases}
			\topderivgen J(\Omega)(x)\geq0, \quad & \text{if } \ x\in \holdall\setminus\overline{\Omega}, \\
			\topderivgen J(\Omega)(x)\leq\inf_{\tilde{x}\in \holdall\setminus\overline{\Omega}}\topderivgen J(\Omega)(\tilde{x}),  \quad & \text{if } \ x\in\Omega,
		\end{cases} \quad & \text{if } \ |\Omega|=V_L, \\
		\begin{cases}
			\topderivgen J(\Omega)(x)\geq0, \quad & \text{if } \ x\in \holdall\setminus\overline{\Omega}, \\
			\topderivgen J(\Omega)(x)\leq0, \quad & \text{if } \ x\in\Omega,
		\end{cases} \quad &\text{if}  \ V_L<|\Omega|<V_U, \\
		\begin{cases}
			\topderivgen J(\Omega)(x)\geq\sup_{\tilde{x}\in \Omega}\topderivgen J(\Omega)(\tilde{x}), \quad & \text{if } \ x\in \holdall\setminus\overline{\Omega} ,\\
			\topderivgen J(\Omega)(x)\leq0, \quad & \text{if } \ x\in\Omega,
		\end{cases} \quad & \text{if } \ |\Omega|=V_U.
	\end{cases}
\end{equation}
\end{lemma}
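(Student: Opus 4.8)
The plan is to characterize the first-order necessary optimality condition by asking which infinitesimal perturbations of $\Omega$ keep the volume admissible and then requiring that none of them decreases $J$; I would show this requirement is equivalent to the stated inequalities. I distinguish three families of perturbations. A single insertion at a point $x\in D\setminus\overline{\Omega}$ raises the volume and changes $J$ to first order by $l(\epsilon)DJ(\Omega)(x)$; a single removal at $x\in\Omega$ lowers the volume with first-order effect $l(\epsilon)DJ(\Omega)(x)$; and a volume-neutral swap simultaneously removes at $x\in\Omega$ and inserts at $\tilde x\in D\setminus\overline{\Omega}$. Since the removed and inserted balls share the radius $\epsilon$, the swap is volume-neutral to first order, and Assumption~\ref{ass:ass_proof} gives that its first-order effect on $J$ equals $l(\epsilon)\big(DJ(\Omega)(x)+DJ(\Omega)(\tilde x)\big)$. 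Throughout I would convert the sign of $DJ$ into the generalized derivative via $\mathcal{D}J(\Omega)=DJ(\Omega)$ on $D\setminus\overline{\Omega}$ and $\mathcal{D}J(\Omega)=-DJ(\Omega)$ on $\Omega$.

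I would then split into the three volume regimes. In the interior case $V_L<|\Omega|<V_U$, both a single insertion and a single removal remain admissible for all sufficiently small $\epsilon$, so optimality forces $DJ(\Omega)(x)\ge 0$ for every $x\in D\setminus\partial\Omega$; recast through $\mathcal{D}J$ this is $\mathcal{D}J(\Omega)(x)\ge 0$ outside and $\mathcal{D}J(\Omega)(x)\le 0$ inside, i.e.\ exactly \eqref{eq:necessary_opt_condition} in the generalized form, reproducing the middle branch.

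For the active lower bound $|\Omega|=V_L$ a single removal is no longer admissible, since it would push the volume below $V_L$, while a single insertion still is because $V_L<V_U$; hence only $DJ(\Omega)(x)\ge 0$ on $D\setminus\overline{\Omega}$ survives, giving $\mathcal{D}J(\Omega)(x)\ge 0$ there. To control the interior points I would invoke the volume-neutral swap: admissibility together with optimality yields $DJ(\Omega)(x)+DJ(\Omega)(\tilde x)\ge 0$ for all $x\in\Omega$, $\tilde x\in D\setminus\overline{\Omega}$, which in terms of $\mathcal{D}J$ reads $\mathcal{D}J(\Omega)(\tilde x)\ge\mathcal{D}J(\Omega)(x)$; fixing $x$ and taking the infimum over $\tilde x$ produces $\mathcal{D}J(\Omega)(x)\le\inf_{\tilde x\in D\setminus\overline{\Omega}}\mathcal{D}J(\Omega)(\tilde x)$, the lower-bound branch. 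The active upper bound $|\Omega|=V_U$ is symmetric: now a single insertion is inadmissible while a single removal is admissible, giving $\mathcal{D}J(\Omega)(x)\le 0$ on $\Omega$, and the reverse swap (insert at $x\in D\setminus\overline{\Omega}$, remove at $\tilde x\in\Omega$, again covered by Assumption~\ref{ass:ass_proof} after relabeling) gives $\mathcal{D}J(\Omega)(x)\ge\mathcal{D}J(\Omega)(\tilde x)$ for all $\tilde x\in\Omega$, hence $\mathcal{D}J(\Omega)(x)\ge\sup_{\tilde x\in\Omega}\mathcal{D}J(\Omega)(\tilde x)$.

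The genuinely delicate points, as opposed to routine bookkeeping, are two. First, one must argue that at an active constraint it is precisely the swap --- and not a single-point perturbation --- that carries the binding information, and that the swap stays feasible because it is first-order volume-neutral; this is exactly where Assumption~\ref{ass:ass_proof} is indispensable, since without additivity of the two-point expansion the combined effect could not be reduced to $DJ(\Omega)(x)+DJ(\Omega)(\tilde x)$. Second, care is needed with the direction of the inequalities when passing to $\mathcal{D}J$: the sign flip on $\Omega$ turns the removal inequality into an upper bound and converts the pointwise swap inequalities into the infimum and supremum constraints, so keeping the inf versus sup and the side ($x\in\Omega$ versus $x\in D\setminus\overline{\Omega}$) consistent across all three cases is where the argument must be written out most carefully.
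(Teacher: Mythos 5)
Your proposal is correct and follows essentially the same route as the paper's proof: single-point insertions/removals wherever the volume constraint permits them, and the two-point volume-preserving swap of Assumption \ref{ass:ass_proof} at an active bound, followed by the sign conversion to $\mathcal{D}J$ and the passage to the infimum/supremum over the second point. The only wording to tighten is that the swap is \emph{exactly} volume-neutral (both perturbations are translates of the same set $\epsilon\omega$), not merely volume-neutral to first order --- exact neutrality is what makes the perturbed shape feasible at the active constraint.
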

\begin{proof}
The case $V_L<|\Omega|<V_U$ follows directly from the optimality condition $(\ref{eq:necessary_opt_condition})$ when considering the generalized topological derivative. Without loss of generality, it is sufficient to consider only one of the remaining two cases, the other case follows analogously. Let us assume that the shape $\Omega\in\mathcal{P}(\holdall)$ attains the lower bound of the volume restriction, i.e., $|\Omega|=V_L$. 

Then, the case $x\in \holdall\setminus\overline{\Omega}$ follows from $(\ref{eq:necessary_opt_condition})$, again. Thus, let $x\in\Omega$ be given. As a perturbation of $\Omega$ with $\omega_{\epsilon}(x)=x+\epsilon\omega$ is not admissible, we consider an additional perturbation $\omega_{\epsilon}(\tilde{x})=\tilde{x}+\epsilon\omega$ with $\tilde{x}\in \holdall\setminus\overline{\Omega}$. Thus, we arrive at the perturbed domain $\Omega_\epsilon=\Omega\setminus\overline{\omega_\epsilon(x)}\cup\omega_\epsilon(\tilde{x})$, where $\Omega_{\epsilon}$ now is feasible, i.e. $|\Omega_{\epsilon}| = V_L$. 
The shape $\Omega$ now fulfills a necessary optimality condition if
\begin{equation*}
	J(\Omega_{\epsilon})-J(\Omega)\geq0
\end{equation*}
holds. This condition implies
\begin{equation*}
	\lim_{\epsilon\searrow0}\frac{J(\Omega_{\epsilon})-J(\Omega)}{l(\epsilon)}\geq0.
\end{equation*}
Considering Assumption $\ref{ass:ass_proof}$ we arrive at
\begin{equation}
	\lim_{\epsilon\searrow0}\frac{J(\Omega_{\epsilon})-J(\Omega)}{l(\epsilon)}= \topder J(\Omega)(\tilde{x}) + \topder J(\Omega)(x)\geq0.
	\label{eq:proof_constraint_remark}
\end{equation}
Passing to the generalized topological derivative
\begin{equation*}
	\topderivgen J(\Omega)(x) \leq \topderivgen J(\Omega)(\tilde{x}).
\end{equation*}
As this conditions needs to hold for all $\tilde{x}\in \holdall\setminus\overline{\Omega}$, we conclude that the shape $\Omega$ fulfills a necessary optimality condition if
\begin{equation*}
	\topderivgen J(\Omega)(x)\leq\inf_{\tilde{x}\in D\setminus\overline{\Omega}}\topderivgen J(\Omega)(\tilde{x})
\end{equation*}
holds for all $x\in\Omega$, which concludes the proof.
\end{proof}
\begin{remark}
It is sufficient to consider two points in the proof of Lemma $\ref{lemma:vol_optimality}$. For an arbitrary number of points $x_i \in\Omega$ and $\tilde{x}_i \in \holdall\setminus\overline{\Omega}$ for $i=1,...,n$ with $n \in \mathbb{N}$, we define the perturbation $\Omega_\epsilon=\Omega\setminus\left(\cup_{j=1}^{n}\overline{\omega_\epsilon(x_j)}\right)\cup\left(\cup_{j=1}^n\omega_\epsilon(\tilde{x}_j)\right)$. For a shape $\Omega$ to be optimal we have with $(\ref{eq:proof_constraint_remark})$ that each combination of $x_i$ and $\tilde{x}_j$ needs to fulfill $\topder J(\Omega)(x_i)+\topder J(\Omega)(\tilde{x}_j)\geq0$. Applying a generalization of Assumption $\ref{ass:ass_proof}$ for a finite number of perturbations yields
\begin{equation*}
	\lim_{\epsilon\searrow0}\frac{J(\Omega_{\epsilon})-J(\Omega)}{l(\epsilon)}=\sum_{j=1}^n \topder J(\Omega)(x_j)+\sum_{j=1}^n \topder J(\Omega)(\tilde{x}_j)\geq 0.
\end{equation*}
\end{remark}
The optimality condition from Lemma $\ref{lemma:vol_optimality}$ implies that the value of a shape functional cannot be further decreased by exchanging material. A similar result holds for the case where the volume constraint is changed to an equality constraint, i.e. $|\Omega|=V_{\text{des}}$ with $V_{\text{des}}\in\R$ and $0\leq V_{\text{des}} \leq |\holdall|$. We consider the topology optimization problem
\begin{equation}
\label{eq:volume_eqconstrained_top}
\begin{aligned}
	&\min_{\Omega\in\mathcal{P}(\holdall)}J(\Omega) \\
	&\text{s.t.} \quad \ |\Omega|=V_{\text{des}}.
\end{aligned}
\end{equation}
We arrive at the following necessary optimality condition, where the proof is analogous to Lemma $\ref{lemma:vol_optimality}$.
\begin{corollary}
Let $\Omega \in \mathcal{P}(\holdall)$ be a shape with $|\Omega| = V_{\text{des}}$. Let $J$ be the shape functional in $(\ref{eq:volume_eqconstrained_top})$ with generalized topological derivative $\topderivgen J(\Omega)(x)$ for all $x\in \holdall \setminus \overline{\Omega}$. Let Assumption $\ref{ass:ass_proof}$ be fulfilled.
Then, the shape $\Omega\in\mathcal{P}(\holdall)$ fulfills a necessary optimality condition for the volume constrained topology optimization problem $(\ref{eq:volume_eqconstrained_top})$ if the following condition holds for all $x\in \holdall\setminus\partial\Omega$
\begin{equation}
	\begin{cases}
		\topderivgen J(\Omega)(x)\geq\sup_{\tilde{x}\in \Omega}\topderivgen J(\Omega)(\tilde{x}), \quad & \text{if } \ x\in \holdall\setminus\overline{\Omega}, \\
		\topderivgen J(\Omega)(x)\leq\inf_{\tilde{x}\in \holdall\setminus\overline{\Omega}}\topderivgen J(\Omega)(\tilde{x}),  \quad & \text{if } \ x\in\Omega.
	\end{cases} \\
\end{equation}
\end{corollary}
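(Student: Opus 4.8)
The plan is to reuse the volume-preserving exchange argument from Lemma \ref{lemma:vol_optimality}, but to notice at the outset that the equality constraint $|\Omega| = V_{\text{des}}$ forbids \emph{both} pure insertion and pure removal of material: either operation alone alters the volume and is therefore inadmissible. This is the one genuine difference from the lemma. In the boundary cases $|\Omega| = V_L$ and $|\Omega| = V_U$ treated there, exactly one of the two one-sided perturbations stayed feasible and produced the simpler requirement $\mathcal{D}J(\Omega)(x) \geq 0$; here no such one-sided perturbation survives, so \emph{every} admissible direction is a paired exchange, which is precisely why both branches of the asserted condition carry a $\sup$/$\inf$ rather than a bare sign condition.

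Concretely, I would fix an arbitrary pair $x \in \Omega$ and $\tilde{x} \in D \setminus \overline{\Omega}$ and form the perturbation $\Omega_\epsilon = \Omega \setminus \overline{\omega_\epsilon(x)} \cup \omega_\epsilon(\tilde{x})$. Since the removed and inserted balls are both translates of $\epsilon\omega$, they have equal measure, so $|\Omega_\epsilon| = V_{\text{des}}$ exactly and $\Omega_\epsilon$ is feasible. Local optimality gives $J(\Omega_\epsilon) - J(\Omega) \geq 0$, hence the limit of the difference quotient is nonnegative, and applying Assumption \ref{ass:ass_proof} yields $DJ(\Omega)(x) + DJ(\Omega)(\tilde{x}) \geq 0$ for every such pair, exactly as in $(\ref{eq:proof_constraint_remark})$.

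Next I would pass to the generalized topological derivative via $\mathcal{D}J(\Omega)(x) = -DJ(\Omega)(x)$ for $x \in \Omega$ and $\mathcal{D}J(\Omega)(\tilde{x}) = DJ(\Omega)(\tilde{x})$ for $\tilde{x} \in D \setminus \overline{\Omega}$, which turns the pairwise inequality into $\mathcal{D}J(\Omega)(\tilde{x}) \geq \mathcal{D}J(\Omega)(x)$, valid for all admissible pairs. Because $x \in \Omega$ and $\tilde{x} \in D \setminus \overline{\Omega}$ vary independently, I would decouple this: holding $\tilde{x}$ fixed and maximizing over $x \in \Omega$ gives $\mathcal{D}J(\Omega)(\tilde{x}) \geq \sup_{x \in \Omega} \mathcal{D}J(\Omega)(x)$, which after the obvious relabeling is the first branch, while holding $x$ fixed and minimizing over $\tilde{x}$ gives $\mathcal{D}J(\Omega)(x) \leq \inf_{\tilde{x} \in D \setminus \overline{\Omega}} \mathcal{D}J(\Omega)(\tilde{x})$, the second branch. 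Together these are exactly the claimed condition.

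I expect the computational parts — the difference quotient, the sign bookkeeping, and the $\sup$/$\inf$ decoupling — to be entirely mechanical. The only point I would take care to state explicitly, and the nearest thing to an obstacle, is the admissibility observation: one must argue that the exchange perturbation is the sole feasible direction, so that no extra one-sided condition appears and both branches descend from the same paired argument. As in the remark following Lemma \ref{lemma:vol_optimality}, I would also note that restricting to a single exchanged pair loses no generality, since the generalized form of Assumption \ref{ass:ass_proof} reduces a finite family of simultaneous insertions and removals to a sum of individual contributions, whose nonnegativity for every family is equivalent to the pairwise inequality already obtained.
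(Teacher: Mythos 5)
Your proposal is correct and takes essentially the same approach as the paper: the paper proves this corollary only by declaring it ``analogous to Lemma \ref{lemma:vol_optimality}'', and your argument --- observing that under the equality constraint only the paired exchange perturbation $\Omega_\epsilon = \Omega \setminus \overline{\omega_\epsilon(x)} \cup \omega_\epsilon(\tilde{x})$ is admissible, invoking Assumption \ref{ass:ass_proof} to get $DJ(\Omega)(x) + DJ(\Omega)(\tilde{x}) \geq 0$, passing to the generalized derivative, and decoupling the resulting pairwise inequality into the $\sup$ and $\inf$ branches --- is precisely that analogy carried out in full.
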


\begin{remark}
For the topological derivatives stated in Sections \ref{ssec:2pipes5holes_optproblem} and \ref{ssec:top_opt_problem} one can verify Assumption $\ref{ass:ass_proof}$ i.e. by applying an averaged adjoint approach (i.e. \cite{Sturm2020Topology}).
\end{remark} 

\subsection{Level-Set Algorithm}
\label{ssec:top_opt_algorithm}

Based on the optimality condition $(\ref{eq:optimality_generalized})$, we provide a brief overview of the level-set algorithm proposed by Amstutz and Andr\"a in \cite{Amstutz2006new}. We refer to \cite{Blauth2023Quasi} for an overview of established methods for topology optimization problems and novel quasi-Newton methods.

Let $\psi_0$ be a level-set function describing the initial shape $\Omega_0\in\mathcal{P}(\holdall)$. The algorithm updates the level-set function by a linear combination of itself and the generalized topological derivative on the $L^2$-sphere. Let $\psi_n$ be the level-set function in the $n$-th iteration of the algorithm that describes the shape $\Omega_n\in\mathcal{P}(\holdall)$ and $\topderivgen J(\Omega_n)$ the corresponding generalized topological derivative. Then, the update formula from \cite{Amstutz2006new} reads
\begin{equation}
\label{eq:update}
\psi_{n+1}=\frac{1}{\sin(\theta_n)}\left[\sin((1-\theta_n)\kappa_n)\psi_n+\sin(\theta_n\kappa_n)\frac{\topderivgen J(\Omega_n)}{\norm{\topderivgen J(\Omega_n)}{L^2(D)}}\right].
\end{equation}
Here, $\theta_n$ describes the $L^2$-angle between the level-set function and the generalized topological derivative
\begin{equation*}
\theta_n = \arccos\left[\frac{\langle \topderivgen J(\Omega_n),\psi_n\rangle_{L^2(D)}}{\norm{\topderivgen J(\Omega_n)}{L^2(D)} \norm{\psi_n}{L^2(D)}}\right].
\end{equation*}
The step size $\kappa_n>0$ is chosen using a line search procedure so that the objective function value does not increase. This procedure is repeated until the local optimality condition $(\ref{eq:optimality_generalized})$ is reached. Numerically, the algorithm is stopped when the angle $\theta_n$ between the generalized topological derivative and the level-set function becomes smaller that a certain numerical tolerance $\epsilon_\theta$, indicating that $(\ref{eq:optimality_generalized})$ is fulfilled for a $c>0$. For more details and an analysis of the algorithm, we refer to \cite{Amstutz2006new} and \cite{Amstutz2011Analysis}, respectively.

\subsection{A Chain-rule for Topological Derivatives}

Later on, we require a chain rule for topological derivatives, which we introduce here.

\begin{lemma}
Let $J$ be a shape function with topological derivative $\topder J(\Omega)$ for a shape $\Omega\in\mathcal{P}(\holdall)$. Let the positive function $l:\mathbb{R}_{>0}\rightarrow\mathbb{R}_{>0}$ in the definition of the topological derivative (c.f. Definition $\ref{def:topological_derivative}$) be strictly monotonic and continuous. Let the topological derivative $\topder J(\Omega)(x)$ be bounded for all $x\in \holdall\setminus\partial\Omega$. Furthermore, let $f:\R\rightarrow\R$ be a differentiable function with derivative $f'$. Then, the topological derivative of $f(J(\Omega))$ at a point $x\in \holdall\setminus\partial\Omega$ is given by
\begin{equation}
	\topder f(J(\Omega))(x)=f'(J(\Omega))\topder J(\Omega)(x).
\end{equation}
\label{lemma:chain_rule}
\end{lemma}

\begin{proof}
Let $\Omega \in \mathcal{P}(\holdall)$ be a shape and $x\in \holdall\setminus\partial\Omega$ an arbitrary point.
By assumption the topological derivative $\topder J(\Omega)(x)$ of $J$ at the shape $\Omega$ exists for the point $x$. Let the positive function $l:\mathbb{R}_{>0}\rightarrow\mathbb{R}_{>0}$ and the sequence of perturbed domains $\Omega_{\epsilon}$ be as in the Definition $\ref{def:topological_derivative}$. By assumption, the function $l$ is strictly monotonic and continuous, so there exists a positive inverse function $l^{-1}$. We perform a rescaling of the sequence of perturbation shapes $\Omega_{\epsilon}$. For that we introduce a positive variable $t=l(\epsilon)$. Consequently, we also have $l^{-1}(t)=\epsilon$. We introduce a sequence of perturbed shapes $\tilde{\Omega}_t$ by
\begin{equation*}
	\tilde{\Omega}_t\mathrel{\overset{\makebox[0pt]{\mbox{\normalfont\tiny\sffamily def}}}{=}}\Omega_{l^{-1}(t)}=\Omega_{\epsilon}.
\end{equation*}
By the definition of the topological derivative, we have
\begin{equation}
	\lim_{t\searrow0}\frac{J(\tilde{\Omega}_t)-J(\Omega)}{t}=\lim_{\epsilon\searrow0}\frac{J(\Omega_\epsilon)-J(\Omega)}{l(\epsilon)}=\topder J(\Omega)(x).
\end{equation}
Using the asymptotic topological expansion $J(\tilde{\Omega}_t)=J(\Omega)+t \topder J(\Omega)+o(t)$ yields
\begin{equation*}
	f(J(\tilde{\Omega}_t))=f(J(\Omega)+t \topder J(\Omega)+o(t)).
\end{equation*}
Since $f$ is differentiable, we can write $f(z+h)=f(z)+hf'(z)+o(h)$ for a $z\in\R$ and a $h\in\R$. Setting $h_t=t \topder J(\Omega)(z)+o(t)\in\R$ and using the differentiability of $f$ at $J(\Omega)$ yields
\begin{equation*}
	f(J(\tilde{\Omega}_t))=f(J(\Omega)+h_t)=f(J(\Omega))+h_tf'(J(\Omega))+o(h_t).
\end{equation*}
Substituting $h_t$ back and noting that $o(t \topder J(\Omega)(x)+o(t))$ is at least of order $o(t)$ due to the boundness of $\topder J(\Omega)(x)$, we arrive at
\begin{equation*}
	\begin{aligned}
		f(J(\tilde{\Omega}_t))&=f(J(\Omega))+\left[ t \topder J(\Omega)(x)+o(t) \right] f'(J(\Omega))+o(t \topder J(\Omega)(x)+o(t)) \\
		&=f(J(\Omega))+tf'(J(\Omega)) \topder J(\Omega)(x)+o(t).
	\end{aligned}
\end{equation*}
Finally, we obtain for all $x\in \holdall\setminus\partial\Omega$
\begin{equation*}
	\lim_{\epsilon\searrow0}\frac{f(J(\Omega_\epsilon))-f(J(\Omega))}{l(\epsilon)}=\lim_{t\searrow0}\frac{f(J(\tilde{\Omega}_t))-f(J(\Omega))}{t}=f'(J(\Omega)) \topder J(\Omega)(x),
\end{equation*}
which completes the proof.
\end{proof}

\begin{remark}
The monotonicity and the continuity assumptions for the positive function $l$ in Lemma \ref{lemma:chain_rule} are reasonable. These assumptions hold, e.g., for the case $l(\epsilon) = |\omega_{\epsilon}|$, which is used in the proof of Lemma $\ref{lemma:topderivpenalty}$ or in \cite{Amstutz2022introduction, Gangl2020Topology, Novotny2013Topological, Sturm2020Topology}.
\end{remark}

\section{Deflation}
\label{sec:deflation}

As topology optimizations problems tend to attain multiple local solutions, the convergence of gradient-based methods to the global optimum is not guaranteed. Therefore, it is important to find multiple local minimizers. Then, after computing multiple local solutions, the selection of a suitable local minimizer can be based on factors such as its performance compared to other minimizers, aesthetic reasons, or manufacturability considerations in a post processing step.

In the existing literature, various methods have been proposed to address this problem. Examples include continuation methods \cite{Stolpe2001Topology}, a tunneling method \cite{Zhang2018Deflation}, and the utilization of different starting values. In a recent study \cite{Papadopoulos2021Topology}, a deflated barrier method was introduced as a robust approach to compute multiple solutions for density based topology optimization problems. Deflation, which involves computing multiple solutions of systems of nonlinear equations from the same initial guess \cite{Brow1971Deflation, Farrell2015Deflation, Farrell2019Deflation}, is a key component of this method.


In this section, we propose a novel deflation approach that is suitable for general topology optimization problems and can also be applied to more general optimization problems. The core idea of our approach is to introduce a penalty term to the objective, which penalizes the distance to previously found local solutions. This penalty term vanishes if the distance to the previously computed minimizers exceeds a certain threshold, which ensures that the actual topology optimization problem is solved. An illustration of the approach is provided in Figure \ref{fig:penalty_example} for a one-dimensional example. In this example we consider the Rastrigin function (i.e. \cite{Rastrigin1974Rastrigin, Rudolph1990Rastrigin}) with the minimizer $x=0$ (gray). The corresponding penalty function is displayed in blue. We refer to Remark $\ref{remark:comparison_methods}$ for a comparison of our approach with the methods introduced in \cite{Papadopoulos2021Topology, Zhang2018Deflation}.

\begin{figure}
\begin{minipage}[b]{.49\textwidth}
	\centering
	\includegraphics[width=1\textwidth]{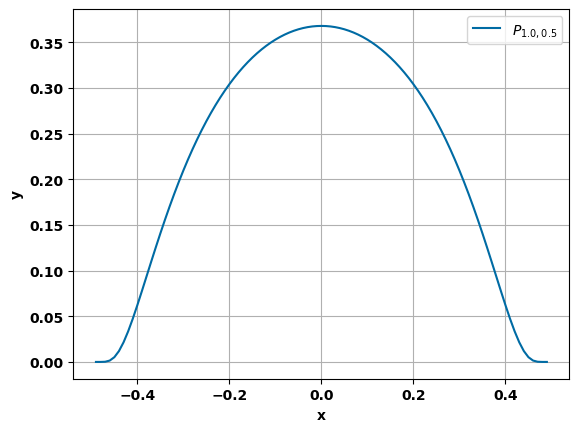}
	\caption{Penalty function $(\ref{eq:penalty})$ for $\delta=1.0$ and $\gamma=0.5$ with $x=\mathrm{dist}(\Omega, \tilde{\Omega})$.\newline}
	\label{fig:penalty}
\end{minipage}
\hfill
\begin{minipage}[b]{.49\textwidth}
	\centering
	\includegraphics[width=1\textwidth]{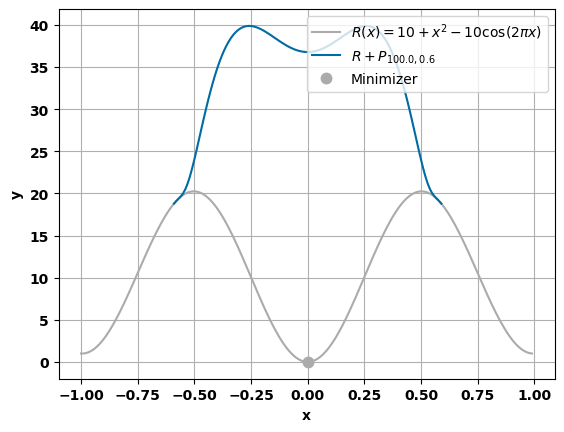}
	\caption{Illustration of the deflation procedure for the one-dimensional Rastrigin function for the minimizer $x=0$.}
	\label{fig:penalty_example}
\end{minipage}
\end{figure}

As in Section \ref{sec:top_opt}, let the hold-all domain $\holdall\subset\R^d$ be bounded with $d\in\mathbb{N}_{>0}$. We assume that $J:\mathcal{P}(\holdall)\rightarrow\R$ is a shape functional, where $\mathcal{P}(\holdall)$ denotes the set of all open subsets of $\holdall$. We consider a general topology optimization problem
\begin{equation}
\label{eq:top_opt_general}
\min_{\Omega\in\mathcal{P}(\holdall)}J(\Omega),
\end{equation}
where additional constraints such as PDE constraints or a volume constraint for $\Omega$ are possible. The goal is to derive a systematic procedure for the computation of multiple local minimizers of $(\ref{eq:top_opt_general})$ using a penalization of previously found minimizers in the objective. We start by introducing the penalty function and stating its topological derivative.


\begin{remark}
	In \cite[Def. 2.1]{Farrell2015Deflation} a strict definition of a deflation operator was given. In this paper, we do not refer to this deflation operator, but use the term "deflation" in the context of deterring convergence to already-discovered local minimizers of optimization problems.
\end{remark}

\subsection{The Penalty Function}
\label{ssec:penalty_function}

For a shape $\Omega \in \mathcal{P}(\holdall)$, we define the indicator function $\chi_{\Omega}$ of $\Omega$ by
\begin{equation*}
	\chi_{\Omega}(x)=\begin{cases}
		1, \quad & \text{if } x\in\Omega, \\
		0, \quad & \text{if } x\notin\Omega.
	\end{cases}
\end{equation*}
To measure the distance of two shapes $\Omega\in\mathcal{P}(\holdall)$ and $\tilde{\Omega}\in\mathcal{P}(\holdall)$, we consider the distance of their corresponding indicator functions $\chi_{\Omega}$ and $\chi_{\tilde{\Omega}}$ in the $L^2$-sense, given by
\begin{equation}
\label{eq:distance}
\mathrm{dist}(\Omega, \tilde{\Omega}) = \norm{\chi_{\Omega}-\chi_{\tilde{\Omega}}}{L^2(\holdall)}. 
\end{equation}
For convenience, we chose the $L^2$-distance, but different $L^p$-distances are also possible with $1\leq p<\infty$. The squared distance can then also be described by the symmetric difference of the two sets $\Omega$ and $\tilde{\Omega}$, i.e.
\begin{equation}
\mathrm{dist}(\Omega, \tilde{\Omega})^2=|\Omega\triangle\tilde{\Omega}|=|(\Omega\cup\tilde{\Omega}) \setminus (\Omega\cap\tilde{\Omega})| = |\Omega| + |\tilde{\Omega}| - 2|\Omega\cap\tilde{\Omega}|.
\end{equation}  

The penalty function should satisfy two characteristics: First, it should vanish if the distance between the two shapes $\Omega$ and $\tilde{\Omega}$ is greater than some threshold value $\gamma>0$. Second, we require that the penalty term takes a positive and finite value if the distance is below the threshold. The following function fulfills those two characteristics
\begin{equation}
\tilde{P}(\Omega,\tilde{\Omega})=\begin{cases}
	1, \quad & \text{if } \mathrm{dist}(\Omega, \tilde{\Omega})<\gamma, \\
	0, \quad & \text{if } \mathrm{dist}(\Omega, \tilde{\Omega})\geq\gamma.
\end{cases}
\end{equation}
However, since this function is discontinuous and non-differentiable, we instead consider a smooth exponential function. The non-differentiability of this function would make our approach inapplicable to optimization procedures using derivatives. We propose the following penalty function
\begin{equation}
P_{\delta,\gamma}(\Omega,\tilde{\Omega})= \begin{cases}
	\begin{aligned}
		\delta\exp\left(\frac{\gamma^2}{\mathrm{dist}(\Omega, \tilde{\Omega})^2-\gamma^2}\right)
	\end{aligned}, \quad & \text{if } \mathrm{dist}(\Omega, \tilde{\Omega})<\gamma, \\
	0, \quad & \text{if } \mathrm{dist}(\Omega, \tilde{\Omega})\geq\gamma,
\end{cases}
\label{eq:penalty}
\end{equation}
where $\delta>0$ is the penalty parameter. The penalty function for $\delta=1.0$ and $\gamma=0.5$ is displayed in Figure \ref{fig:penalty}.

\subsection{Deflation Procedure}
\label{ssec:deflation_alg} 

\begin{figure}
\begin{minipage}[b]{.49\textwidth}
	\centering
	\includegraphics[width=1\textwidth]{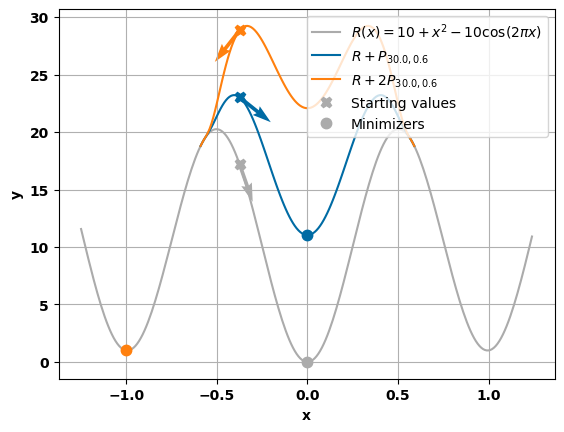}
	\caption{Control of the penalty parameter $\delta$ for the penalty function.}
	\label{fig:penalty_delta}
\end{minipage}
\hfill
\begin{minipage}[b]{.49\textwidth}
	\centering
	\includegraphics[width=1\textwidth]{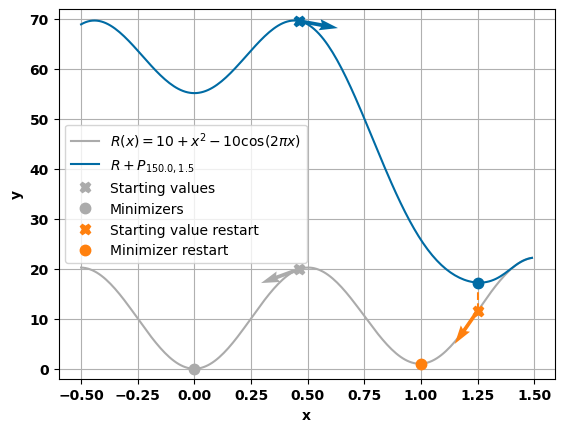}
	\caption{Restart procedure in step $11$ of Algorithm $\ref{algo:deflation}$.\newline}
	\label{fig:penalty_gamma}
\end{minipage}
\end{figure}

We introduce the penalized problem corresponding to $(\ref{eq:top_opt_general})$. Given some shapes $\Omega_1,\dots,\Omega_i$ of $(\ref{eq:top_opt_general})$, we gather those shapes in the set $\mathcal{D}=\set{\Omega_1,\dots,\Omega_i}$. We extend the penalty function $(\ref{eq:penalty})$ for the set $\mathcal{D}$ as follows
\begin{equation}
P_{\delta,\gamma}(\Omega, \mathcal{D}) = \sum_{\Omega_j\in\mathcal{D}}P_{\delta,\gamma}(\Omega, \Omega_j).
\label{eq:topderivpenaltymultiple}
\end{equation}
Then, we add this penalty term to the objective function in $(\ref{eq:top_opt_general})$, resulting in the penalized topology optimization problem
\begin{equation}
\label{eq:optproblemdeflation}
\min_{\Omega\in\mathcal{P}(\holdall)}J_{\delta,\gamma}(\Omega)=J(\Omega)+P_{\delta,\gamma}(\Omega, \mathcal{D}).
\end{equation}
Thus, for each element in the set $\mathcal{D}$, we add a penalty function of the type $(\ref{eq:penalty})$ to the objective. We introduce a set of shapes $\mathcal{D}_{\text{def}}$, which is used to build the penalty function in each iteration of the deflation procedure. In this set we store the local minimizers of the perturbed problems $(\ref{eq:optproblemdeflation})$. Additionally, we establish a set $\mathcal{D}_{\text{sol}}$ containing all local minimizers of $(\ref{eq:top_opt_general})$ found during the deflation process. The introduction of this set is necessary, as local minimizers of the perturbed problem $(\ref{eq:optproblemdeflation})$ do not necessarily coincide with local minimizers of the unperturbed problem $(\ref{eq:top_opt_general})$.

We describe the deflation procedure and refer to Algorithm \ref{algo:deflation} for an overview. First of all, we mention that we perform a total number of $n>0$ iterations of the following technique. We start by solving the unperturbed topology optimization problem $(\ref{eq:top_opt_general})$ and add its solution $\Omega$ to both the solution set $\mathcal{D}_{\text{sol}}$ and $\mathcal{D}_{\text{def}}$ (steps $1-2$ of Algorithm $\ref{algo:deflation}$). The procedure in the remaining $n-1$ iterations is as follows:

First, we construct a new penalty function that takes all shapes in $\mathcal{D}_{\text{def}}$ into account. The penalty function has the following form
\begin{equation}
P_{\gamma, \delta}(\Omega,\mathcal{D}_{\text{def}}).
\end{equation}
Next, we compute a solution $\Omega$ of the perturbed topology optimization problem $(\ref{eq:optproblemdeflation})$ using the previously defined penalty function (steps $5-6$ of Algorithm $\ref{algo:deflation}$). Now, we have to distinguish between two cases: First, if the computed minimizer is also a local minimizer of the unperturbed problem $(\ref{eq:top_opt_general})$, we add the shape to both the solution set $\mathcal{D}_{\text{sol}}$ and the set $\mathcal{D}_{\text{def}}$ (steps $7-8$ of Algorithm $\ref{algo:deflation}$). This condition is fulfilled if every penalty function vanishes. If this it not the case, we found a local minimizer of $(\ref{eq:optproblemdeflation})$, but not necessarily of $(\ref{eq:top_opt_general})$. Therefore, we add this shape only to $\mathcal{D}_{\text{def}}$, not to the solution set $\mathcal{D}_{\text{sol}}$ (step $10$ of Algorithm $\ref{algo:deflation}$). 

This step is crucial as it can be interpreted as an adaptive control for the penalty parameter $\delta$. In cases where we find a previously computed minimizer of the unperturbed problem, we still include this shape in the penalty functions of the subsequent iterations. Thus, we penalize this local minimizer with an additional term in the penalty function, and therefore effectively increase the penalty parameter. Consequently, convergence to the previously found minimizers becomes less likely. As already mentioned, this behavior can be interpreted as an adaptive control of the penalty parameter $\delta$, ensuring that our deflation procedure is not so sensitive to the initial choice of the parameter. However, it is worth mentioning that selecting a suitable penalty parameter is important for a fast convergence. We refer to Figure $\ref{fig:penalty_delta}$ and Remark \ref{remark:description_figures} for a one-dimensional example of this control of the penalty parameter.

If the penalty function does not vanish for the computed local minimizer $\Omega$ of $(\ref{eq:optproblemdeflation})$, we cannot guarantee that it is also a (new) local minimizer of $(\ref{eq:top_opt_general})$. In this case, the distance threshold to previously found minimizers poses a restriction on the local solutions that can be computed. To remove this restriction, we perform a restart by solving the unperturbed topology optimization problem $(\ref{eq:top_opt_general})$ with $\Omega$ as the initial shape, resulting in the computed solution $\tilde{\Omega}$. This restart can be interpreted as a systematic generation of starting values for the unperturbed topology optimization problem $(\ref{eq:top_opt_general})$, providing an opportunity to find minimizers independently of the choice of $\gamma$. Consequently, it reduces the possible restriction on the solution space of $(\ref{eq:top_opt_general})$ caused by the threshold $\gamma$ in the penalty function. Finally, we check if the shape $\tilde{\Omega}$ has been encountered before, as we may come across previously found shapes while performing the restart. If that is not the case, we have discovered a new local minimizer of $(\ref{eq:top_opt_general})$, and we add $\tilde{\Omega}$ to the solution set $\mathcal{D}_{\text{sol}}$ (steps $11-13$ of Algorithm $\ref{algo:deflation}$). In Figure \ref{fig:penalty_gamma} and Remark \ref{remark:description_figures} we describe this procedure considering a one-dimensional example.

Finally, after performing the desired $n$ iterations of the deflation procedure, we return the set of the discovered local minimizers $\mathcal{D}_{\text{sol}}$ of $(\ref{eq:top_opt_general})$ (step $16$ of Algorithm $\ref{algo:deflation}$). To compute those minimizers we have solved a total number of $n$ topology optimization problems, excluding the number of restarts performed in step $11$.

The deflation procedure allows for a systematic computation of local minimizers of general topology optimization problems by employing a penalization of previously known local minimizers in the objective. In the remaining work, we validate our approach using the known five-holes double-pipe problem (Sec. 4.4 of \cite{Papadopoulos2021Topology}), and finally, apply it for the topology optimization of bipolar plates of hydrogen electrolysis cells in Section \ref{sec:model_problem}.

\begin{remark}	
We describe the control of the penalty parameter $\delta$ and the restart procedure considering the one-dimensional Rastrigin function. The minimization of the Rastrigin function is a model problem that features multiple local minimizers.

We start by describing the control of the penalty parameter $\delta$. The situation is displayed in Figure $\ref{fig:penalty_delta}$. First, we consider the minimization of the Rastrigin function, which leads to the minimizer $x=0$ (gray). After constructing the penalty function for this minimizer (blue), solving the perturbed problem with the same starting value as before still leads to the same minimizer $x=0$. Another penalty function (orange) for $x=0$, which effectively doubles the penalty parameter $\delta$, leads to the discovery of the new minimizer $x=1$ for the same initial value.

For the restart procedure, we consider the same model problem. We refer to Figure $\ref{fig:penalty_gamma}$ for an illustration. Again, the minimization of the Rastrigin function leads to the minimizer $x=0$ (gray). We construct the corresponding penalty function (blue) and solve the perturbed problem with the same starting value. This leads to a local minimizer which is not a local minimizer of the Rastrigin function. Using this minimizer as the starting value for the minimization of the unperturbed Rastrigin function results in the discovery of a new local minimizer (orange).
\label{remark:description_figures}
\end{remark}

\begin{remark}
We compare our deflation approach with the deflated barrier method in \cite{Papadopoulos2021Topology}, as well as the tunneling method presented in \cite{Zhang2018Deflation}. As previously noted, the method in \cite{Papadopoulos2021Topology}, has to be combined with Newton's method, which is not yet available for sensitivity-based topology optimization since it is unclear how to define or use the second-order topological derivative in this context. Now, unlike our method, the deflated barrier method ensures that no new spurious local minimizers are introduced. However, in our case, this is unavoidable due to the addition of the penalty function to the objective. 

Furthermore, in \cite{Zhang2018Deflation}, a similar penalization strategy is used to obtain better initial guesses for the considered optimization problem. In this approach, a two-phase method was used consisting out of an optimization phase and a tunneling phase to determine a new starting value for the optimization phase. In our method however, we overcome the need for a two-phase method due to the locality of the penalty functions. More specifically, we ensure the preservation of all minimizers located at a distance $\geq\gamma$ from the previously found local minimizers. Moreover, we have established the automated restart procedure for the case that not all penalty functions vanish for the discovered local minimizer. These two features of our method overcome the need for a two-phase method as in \cite{Zhang2018Deflation}. It is also important to note that the method from \cite{Zhang2018Deflation} guarantees that subsequent found local minimizers outperform those that were previously found, which is not guaranteed in our deflation approach. However, during the tunneling phase an adaption of the parameters might be necessary, which would result in the necessity for multiple solves of the tunneling problem. In our method, on the contrary, once the parameters $\gamma$ and $\delta$ for the penalty function are initially calibrated, they may remain fixed.
\label{remark:comparison_methods}
\end{remark}

\begin{remark}
The choice of the parameters $\delta$ and $\gamma$ for the penalty function is problem specific. The parameter choices for the numerical tests in this paper were determined through parameter tuning. However, as previously mentioned in the description of the deflation procedure, our method is not overly sensitive to the choice of the parameters due to the restart procedure and the internal enlargement of the penalty parameter $\delta$. Nevertheless, the selection of suitable parameters might improve the computational time as well as the stability of the method. The development of a robust method for choosing the parameters $\delta$ and $\gamma$ is part of future work.
\end{remark}

\begin{algorithm2e}[!t]
\KwIn{Parameters $\delta>0$ and $\gamma>0$ for the penalty function, number of desired iterations $n$}
Compute a minimizer $\Omega$ of $(\ref{eq:top_opt_general})$\\
Set $\mathcal{D}_{\text{sol}}=\{\Omega\}$ and $\mathcal{D}_{\text{def}}=\{\Omega\}$ \\
Set $i=1$ \\
\While{$i\leq n$}{
	Set the penalty function to $P_{\delta,\gamma}(\Omega,\mathcal{D}_{\text{def}})$ \\
	Compute a minimizer $\Omega$ of $(\ref{eq:optproblemdeflation})$ \\
	\If{$\Omega$ is also a minimzer of $(\ref{eq:top_opt_general})$}{
		Set $\mathcal{D}_{\text{sol}}=\mathcal{D}_{\text{sol}}\cup\{\Omega\}$ and $\mathcal{D}_{\text{def}}=\mathcal{D}_{\text{def}}\cup\{\Omega\}$ \\
	}
	\Else{
		Set $\mathcal{D}_{\text{def}}=\mathcal{D}_{\text{def}}\cup\{\Omega\}$ \\
		Perform a restart: Compute a minimizer $\tilde{\Omega}$ of $(\ref{eq:top_opt_general})$ with initial guess $\Omega$ \\ 
		\If{$\tilde{\Omega}$ is not in $\mathcal{D}_{\text{sol}}$}{
			Set $\mathcal{D}_{\text{sol}}=\mathcal{D}_{\text{sol}}\cup\{\tilde{\Omega}\}$
		}
	}
	Set $i=i+1$ \\
}
\KwRet{} the set of local minimizers $\mathcal{D}_{\text{sol}}$
\caption{Deflation procedure for computing multiple local minimizers of $(\ref{eq:top_opt_general})$.}
\label{algo:deflation}
\end{algorithm2e}

\subsection{Topological Derivative of the Penalty Function}
\label{ssec:penalty_function_top_der}

We state the generalized topological derivative of the penalty function $(\ref{eq:penalty})$, where we consider the topological derivative w.r.t. the first argument of the penalty function.

\begin{lemma}
Let $\Omega \in \mathcal{P}(\holdall)$ be a shape and let $\tilde{\Omega}\in\mathcal{P}(\holdall)$ be fixed. Then, the generalized topological derivative of the penalty function $(\ref{eq:penalty})$ w.r.t. the first argument is given by
\begin{equation}
	\topderivgen P_{\delta,\gamma}(\Omega, \tilde{\Omega})=\begin{cases}
		\begin{aligned}
			-(1-2\chi_{\tilde{\Omega}})\frac{\gamma^2}{\left(\mathrm{dist}(\Omega, \tilde{\Omega})^2-\gamma^2\right)^2} \delta \exp\left(\frac{\gamma^2}{\mathrm{dist}(\Omega, \tilde{\Omega})^2-\gamma^2}\right),
		\end{aligned}
		&\text{if } \mathrm{dist}(\Omega, \tilde{\Omega})<\gamma, \\
		0,  &\text{if } \mathrm{dist}(\Omega, \tilde{\Omega})\geq\gamma.
	\end{cases}
	\label{eq:topderivpenalty}
\end{equation}
\label{lemma:topderivpenalty}
\end{lemma}

\begin{proof}
For the case $\mathrm{dist}(\Omega, \tilde{\Omega})\geq\gamma$ nothing needs to be done, thus we assume that $\mathrm{dist}(\Omega, \tilde{\Omega})<\gamma$ holds. As we consider the topological derivative of the penalty function $(\ref{eq:penalty})$ with respect to its first argument, we take a fixed shape $\tilde{\Omega}\in\mathcal{P}(\holdall)$. Let $\Omega\in\mathcal{P}(\holdall)$ and let $x\in \holdall\setminus\partial\Omega$ be arbitrary. To verify the generalized topological derivative of the the penalty function $(\ref{eq:penalty})$, we compute the topological derivative of the squared distance of the two shapes $\Omega$ and $\tilde{\Omega}$, and then apply the chain rule from Lemma $\ref{lemma:chain_rule}$ twice. Let $\omega\subset\R^d$ be an open, bounded, and simply connected domain with $0\in\omega$. We define the perturbation $\omega_\epsilon(x)=x+\epsilon\omega$ and the positive function $l:\R_{>0}\rightarrow\R_{>0}$ as its volume, i.e. $l(\epsilon)=|\omega_{\epsilon}(x)|$, where $|\cdot|$ describes the Lebesgue measure of a domain. We need to consider four cases, but we will perform the calculations only for one case, as the remaining cases can be derived similarly.
\begin{itemize}
	\item[$(i)$] $x\in \holdall\setminus\overline{\Omega}$:
	\begin{itemize}
		\item[$(i.1)$] $x\in\tilde{\Omega}$: We disturb $\Omega$ by adding the perturbation $\omega_\epsilon(x)$. We choose $\epsilon$ such that $\omega_{\epsilon}(x)\cap\tilde{\Omega}=\omega_{\epsilon}(x)$ holds and such that $\Omega$ and $\omega_{\epsilon}(x)$ are disjoint, which is possible as $\Omega$ and $\tilde{\Omega}$ are open. Then, we have
		\begin{equation*}
			\begin{aligned}
				\mathrm{dist}(\Omega\cup\omega_\epsilon(x), \tilde{\Omega})^2 &- \mathrm{dist}(\Omega, \tilde{\Omega})^2=\norm{\chi_{\Omega\cup\omega_\epsilon(x)}-\chi_{\tilde{\Omega}}}{L^2(\holdall)}^2-\norm{\chi_{\Omega}-\chi_{\tilde{\Omega}}}{L^2(\holdall)}^2 \\
				&=|\left(\Omega\cup\omega_{\epsilon}(x)\right)\triangle\tilde{\Omega}|-|\Omega\triangle\tilde{\Omega}| \\
				&=|\Omega|+|\omega_{\epsilon}(x)|+|\tilde{\Omega}|-2|\left(\Omega\cup\omega_{\epsilon}(x)\right)\cap\tilde{\Omega}|-|\Omega|-|\tilde{\Omega}|+2|\Omega\cap\tilde{\Omega}| \\
				&=|\omega_{\epsilon}(x)|-2|\left(\Omega\cup\omega_{\epsilon}(x)\right)\cap\tilde{\Omega}|+2|\Omega\cap\tilde{\Omega}|.
			\end{aligned}
		\end{equation*}
		We can apply basic set operations to obtain
		\begin{equation*}
			\left(\Omega\cup\omega_{\epsilon}(x)\right)\cap\tilde{\Omega}=(\Omega\cap\tilde{\Omega})\cup(\omega_{\epsilon}(x)\cap\tilde{\Omega})=(\Omega\cap\tilde{\Omega})\cup\omega_{\epsilon}(x).
		\end{equation*}
		Combining this with the previous calculations and the fact that $\Omega\cap\tilde{\Omega}$ and $\omega_{\epsilon}(x)$ are disjoint yields
		\begin{equation*}
			\begin{aligned}
				\mathrm{dist}(\Omega\cup\omega_\epsilon(x), \tilde{\Omega})^2 - \mathrm{dist}(\Omega, \tilde{\Omega})^2&=|\omega_{\epsilon}(x)|-2|\Omega\cap\tilde{\Omega}|-2|\omega_{\epsilon}(x)|+2|\Omega\cap\tilde{\Omega}| \\
				&=-|\omega_{\epsilon}(x)|.
			\end{aligned}
		\end{equation*}
		Finally, the topological derivative is given by
		\begin{equation*}
			\topder \left( \mathrm{dist}(\Omega, \tilde{\Omega})^2 \right) (x) =\lim_{\epsilon\searrow0}\frac{\mathrm{dist}(\Omega\cup\omega_\epsilon(x), \tilde{\Omega})^2 - \mathrm{dist}(\Omega, \tilde{\Omega})^2}{l(\epsilon)}=\lim_{\epsilon\searrow0}\frac{-|\omega_{\epsilon}(x)|}{|\omega_{\epsilon}(x)|}=-1.
		\end{equation*}
		\item[$(i.2)$] $x\in \holdall\setminus\overline{\tilde{\Omega}}$: Analogous computations yield
		\begin{equation*}
			\topder \left( \mathrm{dist}(\Omega, \tilde{\Omega})^2 \right)(x) =\lim_{\epsilon\searrow0}\frac{\mathrm{dist}(\Omega\cup\omega_\epsilon(x), \tilde{\Omega})^2 - \mathrm{dist}(\Omega, \tilde{\Omega})^2}{l(\epsilon)}=1.
		\end{equation*}
	\end{itemize}
	\item[$(ii)$] $x\in \Omega$:
	\begin{itemize}
		\item[$(ii.1)$] $x\in\tilde{\Omega}$: We get
		\begin{equation*}
			\topder \left( \mathrm{dist}(\Omega, \tilde{\Omega})^2 \right)(x) =\lim_{\epsilon\searrow0}\frac{\mathrm{dist}(\Omega\setminus\omega_\epsilon(x), \tilde{\Omega})^2 - \mathrm{dist}(\Omega, \tilde{\Omega})^2}{l(\epsilon)}=1.
		\end{equation*}
		\item[$(ii.2)$] $x\in \holdall\setminus\overline{\tilde{\Omega}}$: Finally, we have
		\begin{equation*}
			\topder \left( \mathrm{dist}(\Omega, \tilde{\Omega})^2 \right)(x) =\lim_{\epsilon\searrow0}\frac{\mathrm{dist}(\Omega\setminus\omega_\epsilon(x), \tilde{\Omega})^2 - \mathrm{dist}(\Omega, \tilde{\Omega})^2}{l(\epsilon)}=-1.
		\end{equation*}
	\end{itemize}
\end{itemize}
Summarizing all cases and applying the definition of the generalized topological derivative (c.f. $\ref{eq:generalized_top_der}$), we obtain
\begin{equation}
	\topderivgen \left( \mathrm{dist}(\Omega, \tilde{\Omega})^2 \right)(x) = 1-2\chi_{\tilde{\Omega}}(x) \quad \text{for all } x\in \holdall\setminus\partial\Omega.
\end{equation}
Now, we can apply the chain rule from Lemma $\ref{lemma:chain_rule}$, to get
\begin{equation*}
	\begin{aligned}
		\topderivgen \left(\frac{\gamma^2}{\mathrm{dist}(\Omega, \tilde{\Omega})^2-\gamma^2}\right)(x)=-\frac{\gamma^2}{\left(\mathrm{dist}(\Omega, \tilde{\Omega})^2-\gamma^2\right)^2}\left(1-2\chi_{\tilde{\Omega}}(x)\right)
	\end{aligned}
\end{equation*}
Finally, another application of the chain rule yields
\begin{equation*}
	\begin{aligned}
		\topderivgen P_{\gamma,\delta}(\Omega, \tilde{\Omega})(x)=-\left(1-2\chi_{\tilde{\Omega}}(x)\right)\frac{\gamma^2}{\left(\mathrm{dist}(\Omega, \tilde{\Omega})^2-\gamma^2\right)^2} \delta \exp\left(\frac{\gamma^2}{\mathrm{dist}(\Omega, \tilde{\Omega})^2-\gamma^2}\right),
	\end{aligned}
\end{equation*}
for all $ x\in \holdall\setminus\partial\Omega$.
\end{proof}

\section{Numerical Validation of the Deflation Approach}
\label{sec:validation}

We validate the proposed deflation approach by applying it to the so-called five-holes double-pipe problem (Sec. 4.4 of \cite{Papadopoulos2021Topology}), which is a variation of the original Borrvall-Petersson double-pipe problem (Sec. 4.5 of \cite{Borrvall2003Topology}). The problem setup is depicted in Figure \ref{fig:model_deflation}. We start by introducing a model for the topology optimization of fluids in Stokes flow, which is known as the "generalized Stokes problem" and was proposed by Borrvall and Petersson in \cite{Borrvall2003Topology}. This model is also used later on to describe the fluid flow in the bipolar plate of a hydrogen electrolysis cell (c.f. Sec. \ref{sec:model_problem}). 

\subsection{The Borvall-Petersson Model}
\label{ssec:borvall_petersson}

We state a model for the optimization of fluids in Stokes flow, based on the work of Borrvall and Petersson in \cite{Borrvall2003Topology}. Let $\holdall\subset\R^d$ with $d\in\mathbb{N}>0$ be the bounded hold-all domain with boundary $\partial \holdall$. We restrict ourselves to dimension $d=2$, but extensions to dimension $d=3$ are also possible (c.f. \cite{NSa2016Topological}).
The fluid region within $\holdall$ is denoted by $\Omega\subset \holdall$, while the complement $\Omega^c=\holdall\setminus\overline{\Omega}$ represents a solid region. We perform a relaxation and identify the solid and fluid region with a porous medium with an inverse permeability $\alpha$ defined as
\begin{equation*}
\alpha(x) = 
\begin{cases} 
	\alpha_U &\mathrm{if}\ x\in\Omega^c,\\
	\alpha_L &\mathrm{if}\ x\in\Omega,
\end{cases}
\end{equation*}
where $\alpha_U$ and $\alpha_L$ are positive constants with $\alpha_U>>\alpha_L$. In particular, we choose $\alpha_U$ to be large inside the solid area $\Omega^c$ and $\alpha_L$ to be small inside the fluid part $\Omega$. The non-dimensional Stokes-Darcy system reads
\begin{equation}
\begin{aligned}
	-\Delta u+ \alpha u + \nabla p &= 0 \quad &&\text{ in } \holdall, \\
	\mathrm{div}(u) &= 0 \quad &&\text{ in } \holdall,\\
	u &= u_{\mathrm{D}} \quad &&\text{ on } \partial \holdall,\\
	\int_\holdall p\diff x &= 0,
\end{aligned}
\label{eq:stokesdarcy}
\end{equation}
where $u:\holdall\rightarrow\R^d$ and $p:\holdall\rightarrow\R$ denote the velocity and pressure, respectively. The last condition is necessary to ensure the uniqueness of the pressure $p$. For a more detailed description of the model the reader is referred to \cite{Borrvall2003Topology}.

\subsection{Topology Optimization Problem}
\label{ssec:2pipes5holes_optproblem}

The topology optimization problem for minimizing the energy dissipation in the flow is described by 
\begin{equation}
\label{eq:problem2pipes5holes}
\begin{aligned}
	&\min_{\Omega} J(\Omega, u) = \int_\holdall \left(\alpha \, u \cdot u+\grad u : \grad u\right)\diff x \\
	&\text{s.t.} \quad \
	\begin{alignedat}[t]{2}
		-\Delta u+ \alpha u + \nabla p &= 0 \quad &&\text{ in } \holdall, \\
		\mathrm{div}(u) &= 0 \quad &&\text{ in } \holdall,\\
		u &= u_{\mathrm{D}} \quad &&\text{ on } \partial \holdall,\\
		\int_\holdall p \diff x &= 0, \\
		|\Omega| &= V_{\mathrm{des}},
	\end{alignedat} \\
\end{aligned}
\end{equation}
where $|\Omega|$ denotes the Lebesgue measure of the shape $\Omega$ in $\R^d$ and $0\leq V_{\text{des}} \leq |\holdall|$. The generalized topological derivative of problem $(\ref{eq:problem2pipes5holes})$ is given by
\begin{equation}
\label{eq:2pipes5holes_top_der}
\topderivgen J(\Omega)(x)=\left(\alpha_U-\alpha_L\right)u(x)\cdot\left(u(x) + v(x)\right) \ \mathrm{for} \ x\in \holdall\setminus\partial\Omega,
\end{equation}
see e.g. \cite{NSa2016Topological}. Here, $(v,q)$ solves the adjoint Stokes-Darcy system
\begin{equation}
\begin{aligned}
	-\Delta v+ \alpha v + \nabla q &= 2\left(\Delta u-\alpha u\right) \quad &&\text{ in } \holdall, \\
	\mathrm{div}(v) &= 0 \quad &&\text{ in } \holdall,\\
	v &= 0 \quad &&\text{ on } \partial \holdall,\\
	\int_\holdall q\diff x &= 0. 
\end{aligned}
\label{eq:stokesdarcyadjoint}
\end{equation}

\begin{figure}
\centering
\begin{tikzpicture}[scale=0.5]
	\draw[line width = 0.35mm] (0,0) -- (15,0) -- (15,10) -- (0,10) -- (0,0);
	\draw[black, line width = 0.30mm]   plot[smooth,domain=40/6:50/6] ({-3*(\x-40/6)*(50/6-\x)}, \x);
	\draw[black, line width = 0.30mm]   plot[smooth,domain=10/6:20/6] ({-3*(\x-10/6)*(20/6-\x)}, \x);
	\draw[black, line width = 0.30mm]   plot[smooth,domain=40/6:50/6] ({3*(\x-40/6)*(50/6-\x)+15}, \x);
	\draw[black, line width = 0.30mm]   plot[smooth,domain=10/6:20/6] ({3*(\x-10/6)*(20/6-\x)+15}, \x);
	\draw[black, line width = 0.30mm, ->] (-1.5625,17.5/6) -- (0,17.5/6);
	\draw[black, line width = 0.30mm, ->] (-2.08,15/6) -- (0,15/6);
	\draw[black, line width = 0.30mm, ->] (-1.5625,12.5/6) -- (0,12.5/6);
	\draw[black, line width = 0.30mm, ->] (-1.5625,47.5/6) -- (0,47.5/6);
	\draw[black, line width = 0.30mm, ->] (-2.08,45/6) -- (0,45/6);
	\draw[black, line width = 0.30mm, ->] (-1.5625,42.5/6) -- (0,42.5/6);
	
	\draw[black, line width = 0.30mm, ->] (15,17.5/6) -- (16.5625,17.5/6);
	\draw[black, line width = 0.30mm, ->] (15,15/6) -- (17.08,15/6);
	\draw[black, line width = 0.30mm, ->] (15,12.5/6) -- (16.5625,12.5/6);
	\draw[black, line width = 0.30mm, ->] (15,47.5/6) -- (16.5625,47.5/6);
	\draw[black, line width = 0.30mm, ->] (15,45/6) -- (17.08,45/6);
	\draw[black, line width = 0.30mm, ->] (15,42.5/6) -- (16.5625,42.5/6);
	
	\draw[black, line width = 0.30mm, <->] (-2.58,0) -- (-2.58,15/6);
	\draw[black, line width = 0.30mm, <->] (-2.58,10) -- (-2.58,45/6);
	\draw[black, line width = 0.30mm, <->] (0.5,10/6) -- (0.5,20/6);
	\draw[black, line width = 0.30mm, <->] (0.5,40/6) -- (0.5,50/6);
	\draw[black, line width = 0.30mm, <->] (0,10.5) -- (15,10.5);
	\draw[black, line width = 0.30mm, <->] (18,0) -- (18,10);
	
	\draw[black, line width = 0.30mm, dotted] (9.5,2.5) -- (9.5,1.75);
	\draw[black, line width = 0.30mm, dotted] (10.5,2.5) -- (10.5,1.75);
	\draw[black, line width = 0.30mm, <->] (9.5,1.75) -- (10.5,1.75);
	
	\node[] (a) at (7.5,5) {$\holdall$};
	\node[] (b) at (19,5) {$1.0$};
	\node[] (c) at (7.5,11.25) {$1.5$};
	\node[] (d) at (1.25,15/6) {$\frac{1}{6}$};
	\node[] (e) at (1.25,45/6) {$\frac{1}{6}$};
	\node[] (f) at (-3.33,1.25) {$\frac{1}{4}$};
	\node[] (g) at (-3.33,8.75) {$\frac{1}{4}$};
	\node[] (m) at (10,1.15) {$0.1$};
	
	\draw[lightgray, fill=lightgray] (5,10/3) circle (0.5);
	\draw[lightgray, fill=lightgray] (5,20/3) circle (0.5);
	\draw[lightgray, fill=lightgray] (10,2.5) circle (0.5);
	\draw[lightgray, fill=lightgray] (10,5) circle (0.5);
	\draw[lightgray, fill=lightgray] (10,7.5) circle (0.5);
	
	\node[] (h) at (5,10/3+1.05) {$(\frac{1}{2},\frac{1}{3})$};
	\node[] (i) at (5,20/3+1.05) {$(\frac{1}{2},\frac{2}{3})$};
	\node[] (j) at (10,10/4+1.05) {$(1,\frac{1}{4})$};
	\node[] (k) at (10,20/4+1.05) {$(1,\frac{1}{2})$};
	\node[] (l) at (10,30/4+1.05) {$(1,\frac{3}{4})$};
\end{tikzpicture}
\caption{Schematic setup of the hold-all domain $\holdall$ for the double-pipe five-holes problem $(\ref{eq:problem2pipes5holes})$.}
\label{fig:model_deflation}
\end{figure}
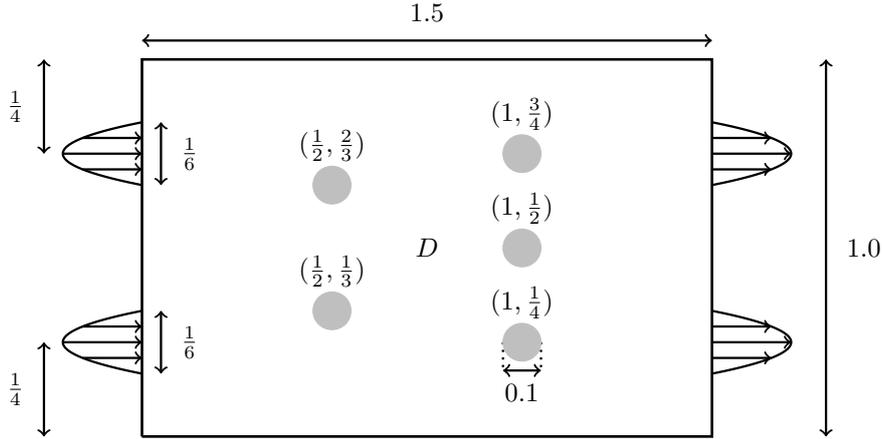

\subsection{Implementation}
\label{ssec:implementation}

We give a brief overview of some aspects of the numerical implementation. First, the systems are discretized using the finite element package FEniCS \cite{Alnes2015FEniCS, Logg2012Fenics}, where the meshes are either created with FEniCS itself or Gmsh \cite{Geuzaine2009Gmsh}. We utilize a LBB-stable Taylor-Hood finite element discretization which consists of quadratic Lagrange elements for the velocity and linear Lagrange elements for the pressure. The level-set function is discretized by linear Lagrange elements. The adjoint equations for the topological gradient calculations are supplied by the software package cashocs \cite{Blauth2021cashocs, Blauth2023Cashocs}, which is an open source software written in python and based on FEniCS. The package allows for automated adjoint computation due to an implementation of the continuous adjoint approach. Furthermore, it can be used to solve shape optimization, optimal control, and topology optimization problems in an automated fashion.

Next, we explain how the volume constraint in $(\ref{eq:problem2pipes5holes})$ is handled numerically. The approach is based on a shift of the level-set function such that the volume constraint is satisfied. We assume that for a newly updated level-set function $\psi_{n+1}$, computed with $(\ref{eq:update})$, the corresponding shape $\Omega_{n+1}$ does not fulfill the volume constraint. Without loss of generality we assume that $|\Omega_{n+1}|<V_{\text{des}}$, the case $|\Omega_{n+1}|>V_{\text{des}}$ follows analogously. Then, we shift the level-set function with a strictly positive constant $c$ such that the volume constraint is fulfilled. In other words, we search for a $c>0$, such that the shape $\tilde{\Omega}_{n+1}$, prescribed by $\tilde{\psi}_{n+1}=\psi_{n+1}+c$, satisfies $|\tilde{\Omega}_{n+1}|=V_{\text{des}}$. Such a $c$ exists if the level-set function is not constant almost everywhere (see Appendix \ref{appendix:a}). If this assumption is not fulfilled, reinitializing the level-set function as a signed distance function yields the desired property (c.f. \cite{Liu2015LevelSet}), and, therefore, this assumption is reasonable. Numerically, this constant $c$ is computed using a bisection approach with numerical accuracy of $\epsilon_c=10^{-4}$. Since no additional PDEs need to be solved, the computational cost of this procedure is reasonable. This procedure can also be applied to handle an inequality constraint on the volume as in Section \ref{ssec:volume_constrained}, where the shifted $\tilde{\Omega}$ of a shape $|\Omega|<V_L$ should satisfy $|\tilde{\Omega}|=V_L$. Again, the case $|\Omega|>V_U$ follows analogously. 

\begin{remark}
The shifting of the level-set function, described above, has a significant advantage in the case of topology optimization for fluids. As the velocity tends to vanish in solid regions, so does the topological derivative, as indicated by $(\ref{eq:2pipes5holes_top_der})$. Consequently, the effect of the update $(\ref{eq:update})$ for the level-set function is negligibly small in those areas, making it hard to change solid back to fluid. However, the shifting of the level-set function provides us with the opportunity to convert solid regions back to fluid regions by moving the level-set function in those areas.
\end{remark}

\subsection{Numerical Results}
\label{ssec:numerical_results_5holes}

We turn to the numerical solution of problem $(\ref{eq:problem2pipes5holes})$. The source code for this example is available publicly on GitHub \cite{Baeck2024Software}. To model an actual solid region, the inverse permeability $\alpha_U$ for the solid part should tend to $+\infty$. For the numerical investigation, we choose the following values for the inverse permeabilities
\begin{equation*}
\alpha_L=\frac{2.5}{100^2}, \;\; \alpha_U=\frac{2.5}{0.0025^2}.
\end{equation*}
Here, we increased the value of the inverse permeability for the solid part compared to the values proposed in \cite{NSa2016Topological} to avoid fluid flow through solid regions, which we experienced in numerical tests.

The problem setup is displayed in Figure \ref{fig:model_deflation}. To discretize the hold-all domain $\holdall$, we utilize a uniform mesh consisting of \num{4718} nodes and \num{9108} triangles and an average edge length around $0.02$. The discretization leads to a linear system with \num{37096} unknowns for the velocity and \num{4178} for the pressure. For the Dirichlet boundary conditions, we describe the in- and outflow on the left and ride side of the domain $\holdall$ by the parabolic profiles
\begin{equation*}
u_{\text{D}}=\begin{bmatrix}
	-144(y-\frac{1}{6})(y-\frac{2}{6})\\
	0
\end{bmatrix} \; \text{for } \frac{1}{6}\leq y\leq \frac{2}{6},
\end{equation*}
as well as
\begin{equation*}
u_{\text{D}}=\begin{bmatrix}
	-144(y-\frac{4}{6})(y-\frac{5}{6})\\
	0
\end{bmatrix} \; \text{for } \frac{4}{6}\leq y\leq \frac{5}{6},
\end{equation*}
respectively. On the remaining parts of the boundary, we impose a no-slip condition for the velocity, i.e. $u_{\text{D}}=[0,0]^T$. For the volume target we choose $V_{\text{des}}=0.5$, which corresponds to one third of the volume of the hold-all domain $\holdall$.

We apply our deflation approach using Algorithm \ref{algo:deflation} with parameters $\gamma=0.7$ and $\delta=10^{6}$. We computed $37$ different local minimizers with our approach, which are displayed in Figure \ref{fig:doublepipefiveholes}. The shapes are arranged in the order they were discovered by the deflation procedure. The best local optimizer, found in the second iteration of Algorithm $\ref{algo:deflation}$, coincides with the best performing shape reported in \cite{Papadopoulos2021Topology}. The objective function value for each local minimizer is displayed in Figure $\ref{fig:doublepipefiveholes_objective}$. Overall, the results of Figure \ref{fig:doublepipefiveholes} are consistent with the $42$ designs found in \cite{Papadopoulos2021Topology}, demonstrating that our deflation technique is indeed capable of discovering multiple local minimizers of topology optimization problems.

To compute these $37$ local solutions, we needed to perform $100$ iterations of our deflation algorithm, resulting in the solution of $100$ topology optimization problems, with an additional $93$ solves due to the restarts (step 12 of Algorithm \ref{algo:deflation}). Thus, our procedure discovers a new local minimizer roughly every $2.5$ iterations of Algorithm \ref{algo:deflation}. Since we stopped our algorithm after performing these $100$ iterations, there is a possibility that we did not find all local minimizers, as evidenced by the absence of $\mathbb{Z}_2$ symmetric pairs in Figure \ref{fig:doublepipefiveholes}, which must also be solutions (i.e. shapes \num{13} and \num{35}). Increasing the number of iterations may help in discovering more local minimizers. 

In Figure $\ref{fig:doublepipefiveholes_iterations}$ we plotted the iterations needed to solve the topology optimization problems in each iteration of the deflation procedure. The number of iterations to solve the perturbed optimization problems is displayed in orange. The sum of the number of iterations for the solution of the perturbed problem as well as the restart procedure is depicted in blue. It is apparent that there is no clear trend for an increase of iterations for more penalty functions. Consequently, the deterioration in the conditioning of the problem as a result of the deflation routine is negligible.

In summary, this example demonstrates that our approach is capable to compute multiple local minimizers of topology optimization problems. Furthermore, it highlights the stability of our approach, considering that the penalty function consists of $99$ terms in the final iteration of the deflation procedure.

\begin{figure}
\centering
\captionsetup[subfigure]{justification=centering}
\captionsetup{justification=centering}
\begin{subfigure}[t]{0.49\textwidth}
	\centering
	\includegraphics[width=\textwidth]{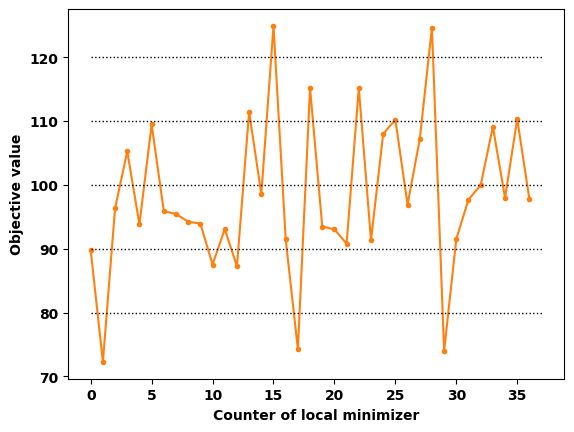}
	\caption{Objective function value for each local minimizer of $(\ref{eq:problem2pipes5holes})$ displayed in Figure $\ref{fig:doublepipefiveholes}$.}
	\label{fig:doublepipefiveholes_objective}
\end{subfigure}
\hfill
\begin{subfigure}[t]{0.49\textwidth}
	\centering
	\includegraphics[width=\textwidth]{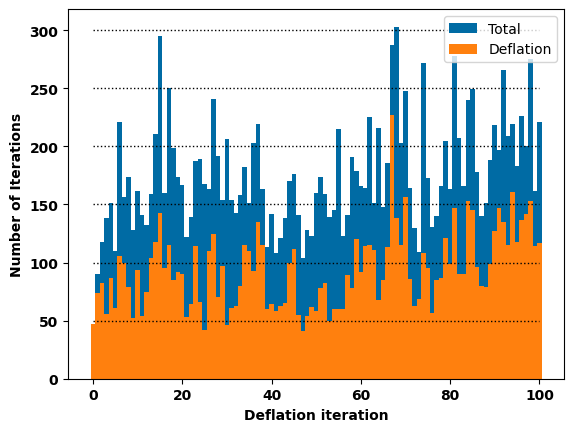}
	\caption{Number of solver iterations for each iteration of the deflation procedure. The iterations for the perturbed problem are displayed in orange and the total iterations (including the restart procedure) in blue.}
	\label{fig:doublepipefiveholes_iterations}
\end{subfigure}
\caption{Evaluation plots for $(\ref{eq:problem2pipes5holes})$.}
\label{fig:doublepipefiveholes_2}
\end{figure}

\begin{figure}
\centering
\includegraphics[width=0.15\linewidth]{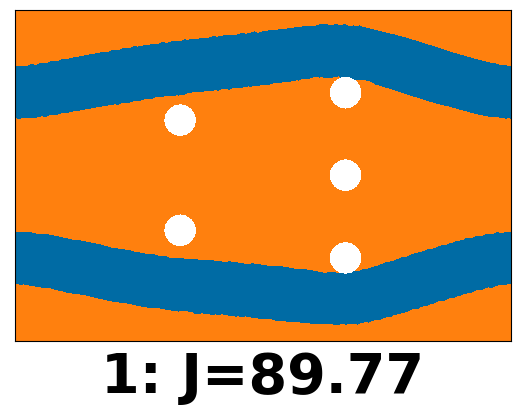}
\includegraphics[width=0.15\linewidth]{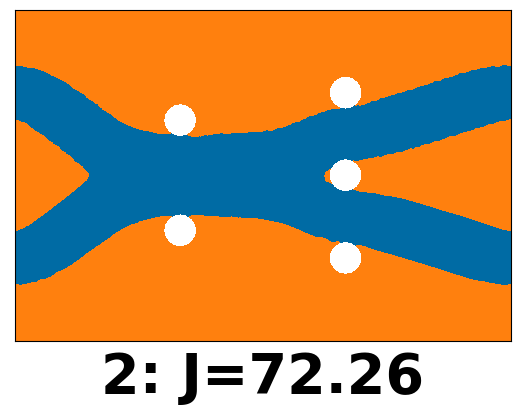}
\includegraphics[width=0.15\linewidth]{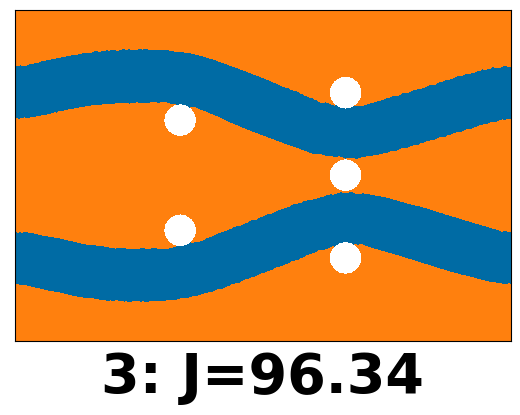}
\includegraphics[width=0.15\linewidth]{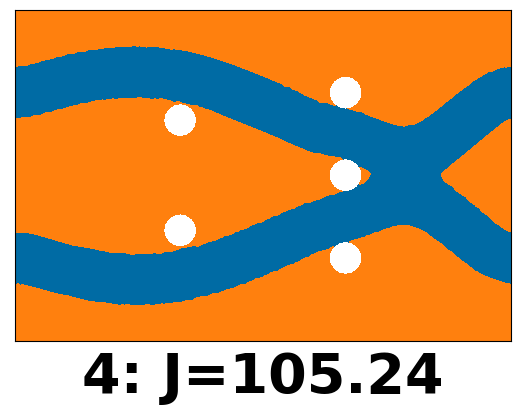}
\includegraphics[width=0.15\linewidth]{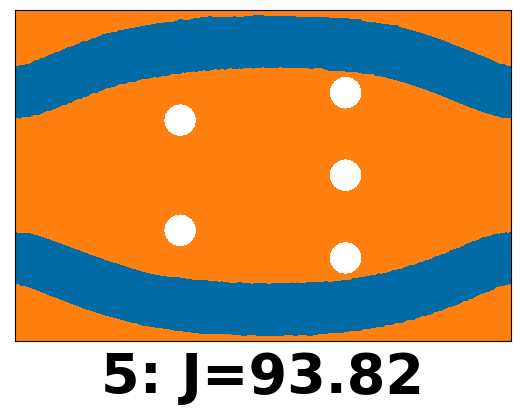}
\includegraphics[width=0.15\linewidth]{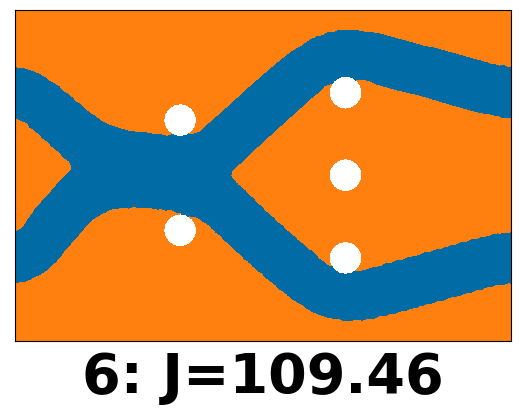}
\includegraphics[width=0.15\linewidth]{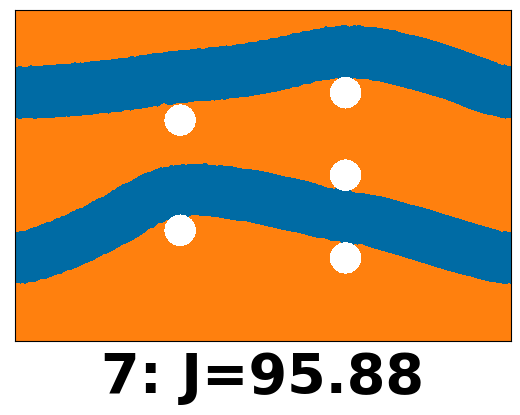}
\includegraphics[width=0.15\linewidth]{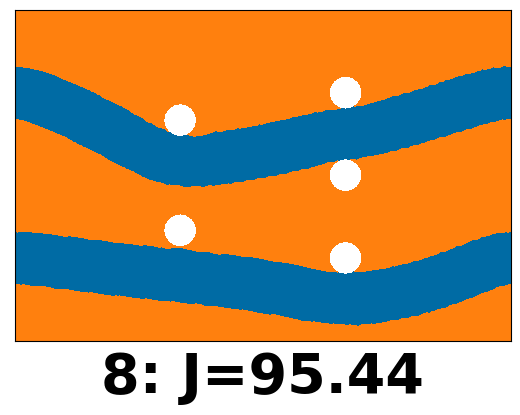}
\includegraphics[width=0.15\linewidth]{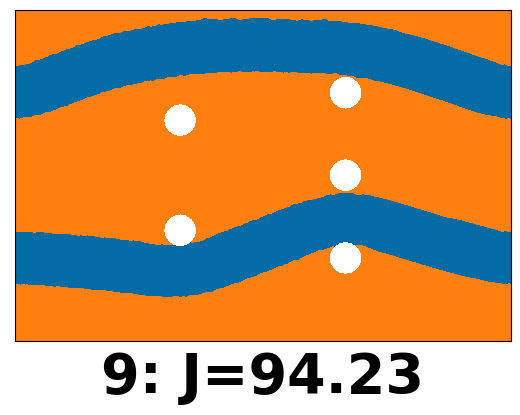}
\includegraphics[width=0.15\linewidth]{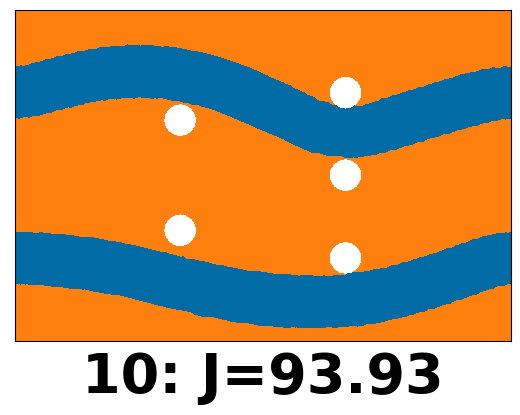}
\includegraphics[width=0.15\linewidth]{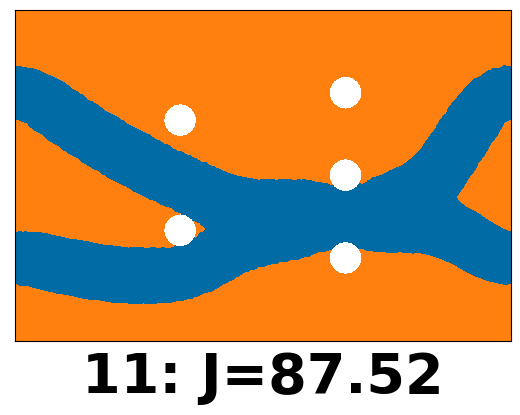}
\includegraphics[width=0.15\linewidth]{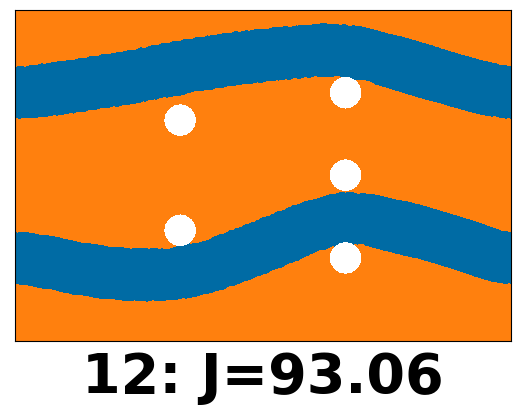}
\includegraphics[width=0.15\linewidth]{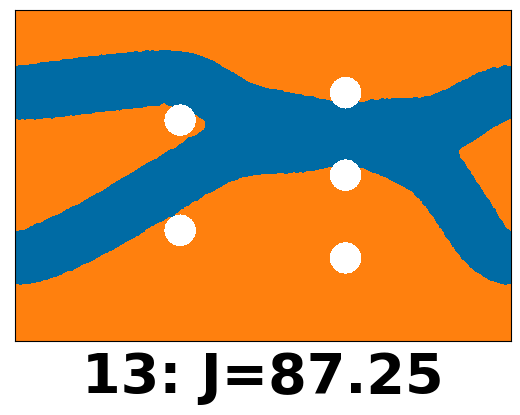}
\includegraphics[width=0.15\linewidth]{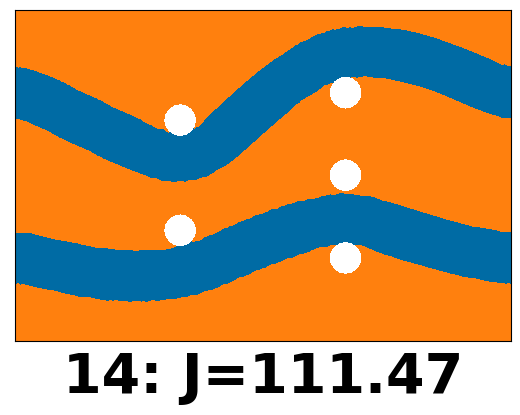}
\includegraphics[width=0.15\linewidth]{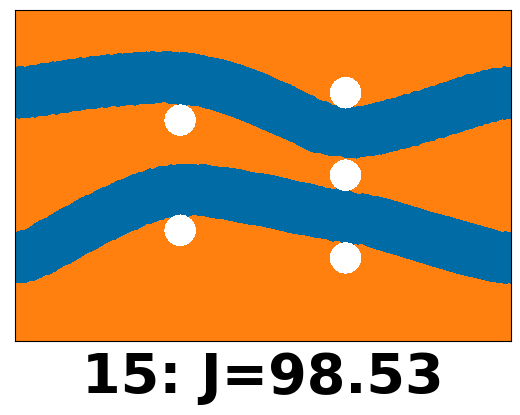}
\includegraphics[width=0.15\linewidth]{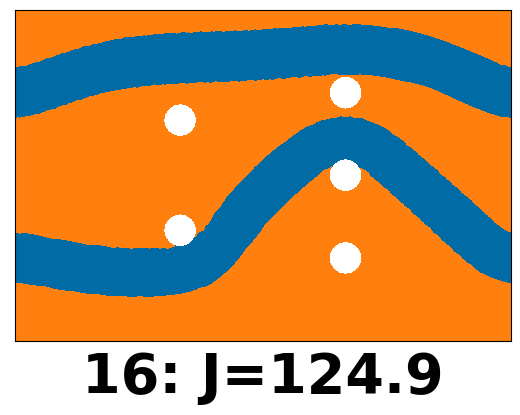}
\includegraphics[width=0.15\linewidth]{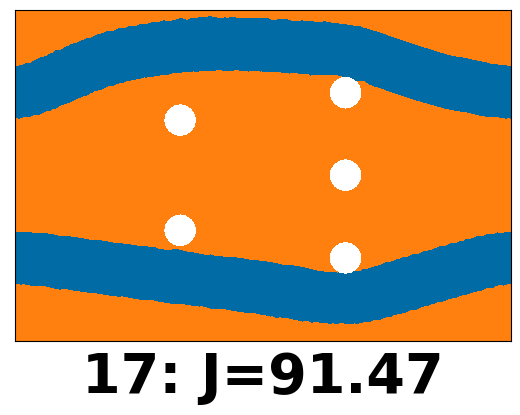}
\includegraphics[width=0.15\linewidth]{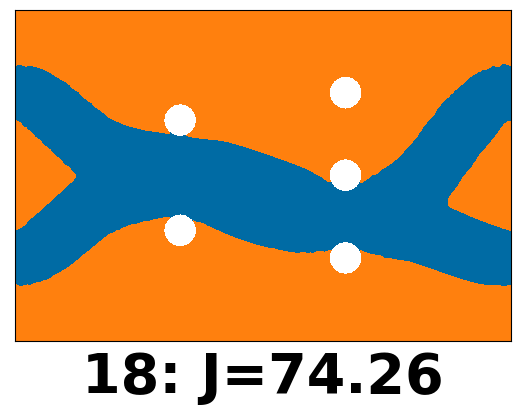}
\includegraphics[width=0.15\linewidth]{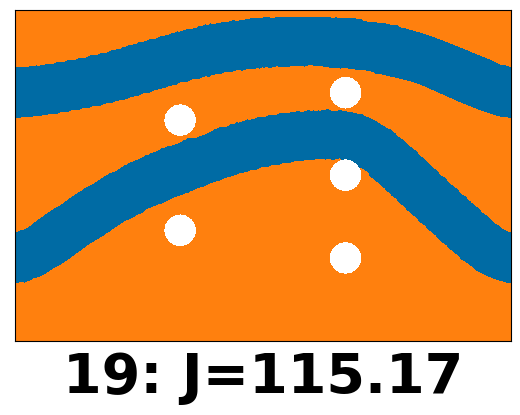}
\includegraphics[width=0.15\linewidth]{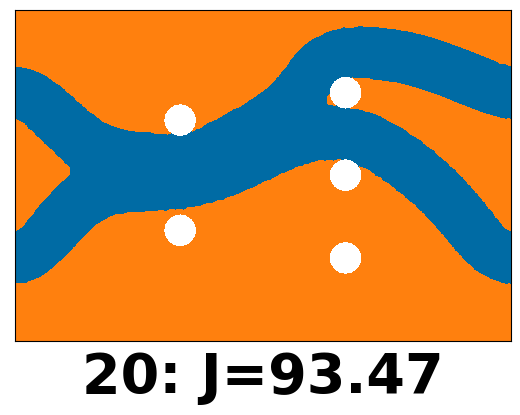}
\includegraphics[width=0.15\linewidth]{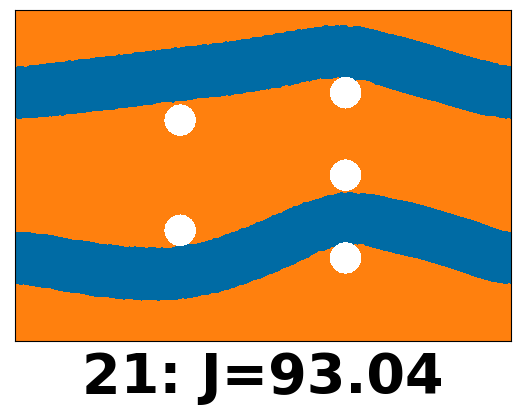}
\includegraphics[width=0.15\linewidth]{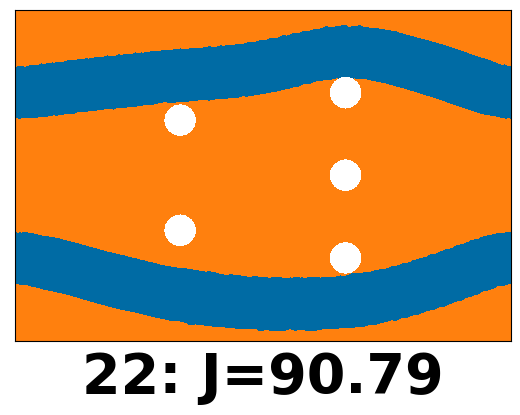}
\includegraphics[width=0.15\linewidth]{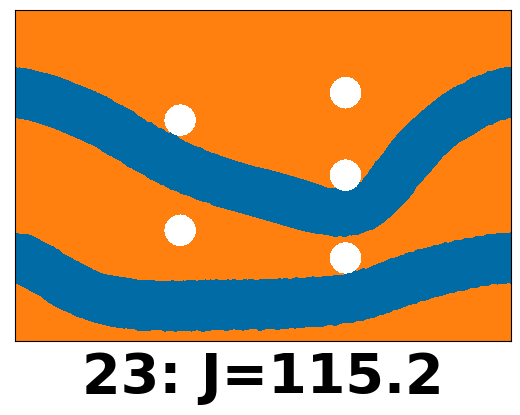}
\includegraphics[width=0.15\linewidth]{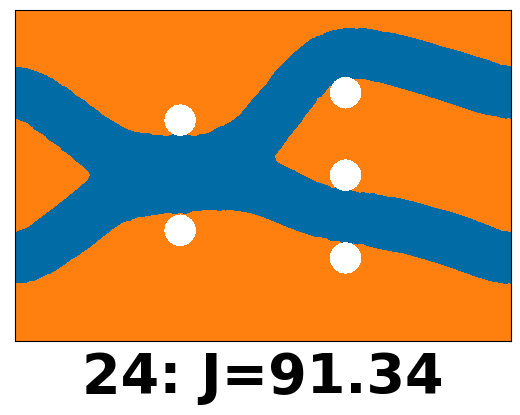}
\includegraphics[width=0.15\linewidth]{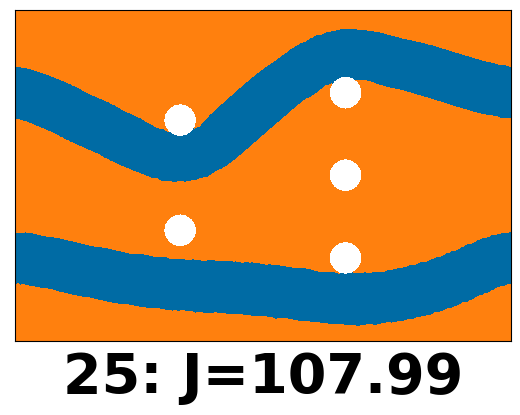}
\includegraphics[width=0.15\linewidth]{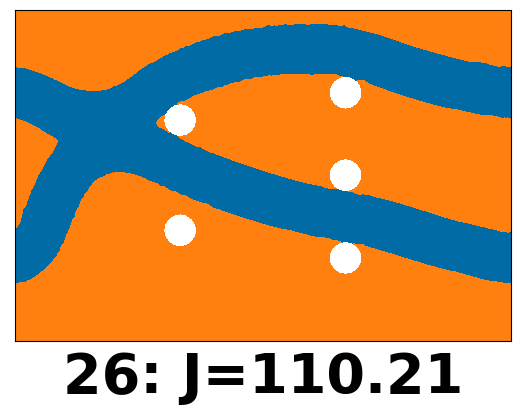}
\includegraphics[width=0.15\linewidth]{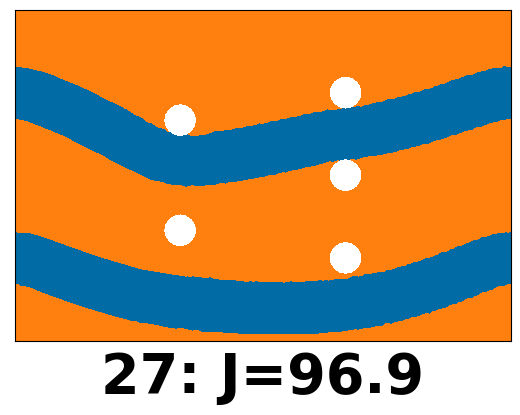}
\includegraphics[width=0.15\linewidth]{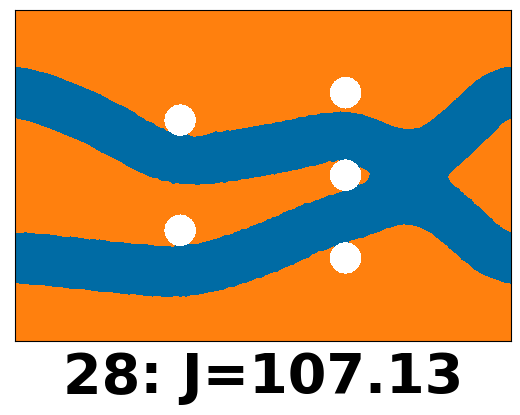}
\includegraphics[width=0.15\linewidth]{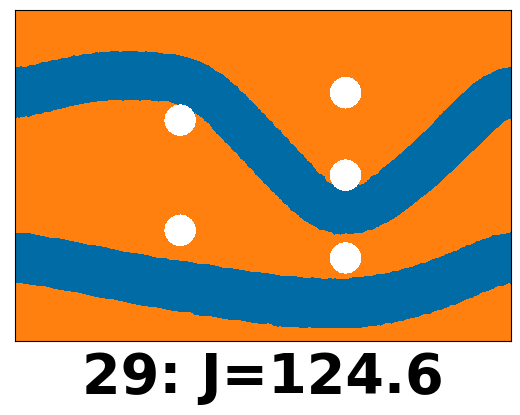}
\includegraphics[width=0.15\linewidth]{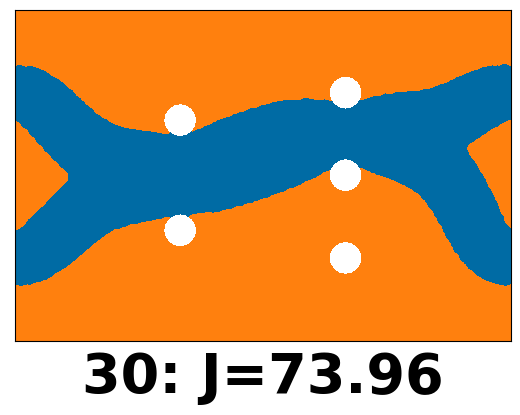}
\includegraphics[width=0.15\linewidth]{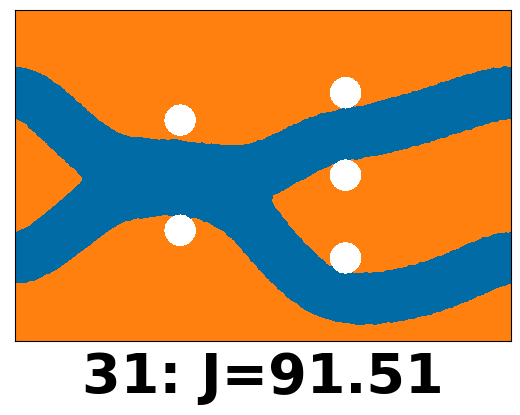}
\includegraphics[width=0.15\linewidth]{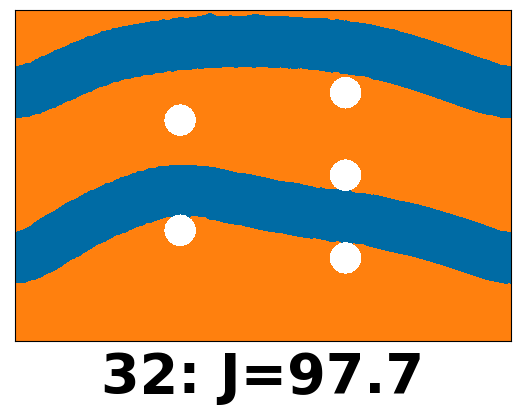}
\includegraphics[width=0.15\linewidth]{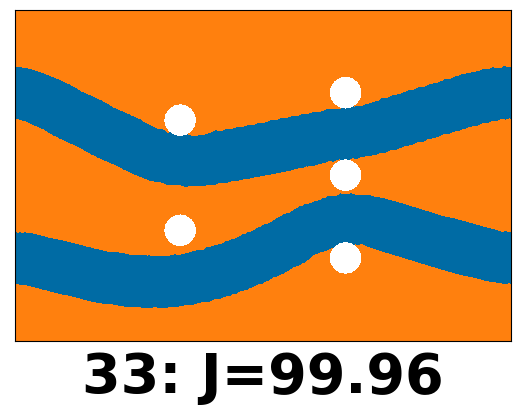}
\includegraphics[width=0.15\linewidth]{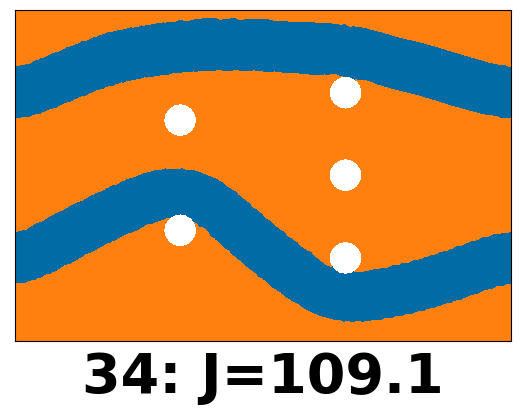}
\includegraphics[width=0.15\linewidth]{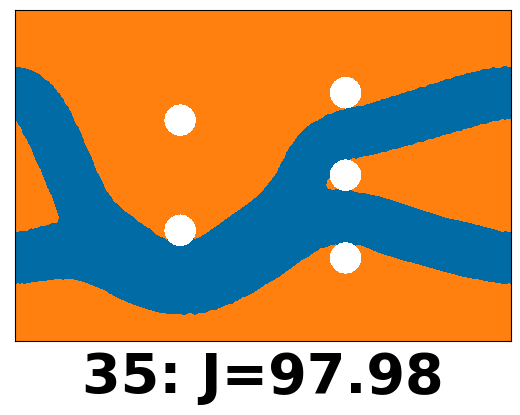}
\includegraphics[width=0.15\linewidth]{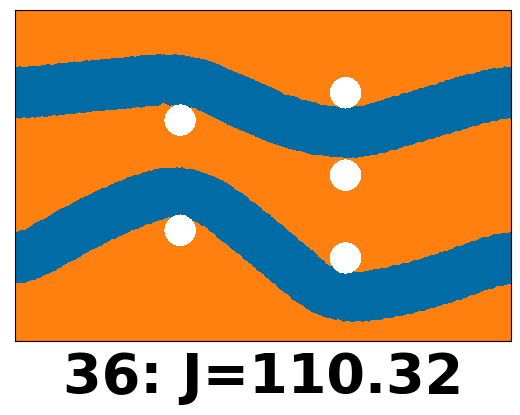}
\includegraphics[width=0.15\linewidth]{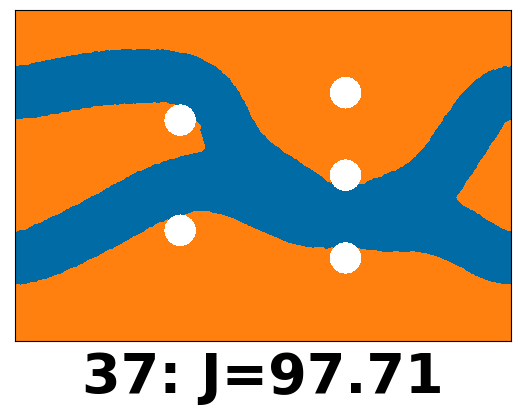}
\caption{The material distribution of 37 local minimizers of the double-pipe five-holes benchmark problem $(\ref{eq:problem2pipes5holes})$ and their associated objective function values $J$. Blue corresponds to the fluid area and orange to the solid part of the hold-all domain $\holdall$.}
\label{fig:doublepipefiveholes}
\end{figure}

\section{Topology Optimization of the Bipolar Plate}
\label{sec:model_problem}

We consider the topology optimization of bipolar plates of hydrogen electrolysis cells. The model for this problem was already introduced in our previous works \cite{Baeck2023Topology, Baeck2024ECMI}. Through extensive numerical tests and parameter studies, we have confirmed the presence of multiple local minimizers of the optimization problem. Therefore, we apply our novel deflation approach from Section \ref{sec:deflation} to this model problem to compute multiple possible layouts for such bipolar plates.

First, we provide a more detailed explanation about protone exchange membrane (PEM) electrolysis cells. An example of such a cell is displayed in Figure $\ref{fig:bpp}$. The primary objective of these cells is to split water into oxygen and the desired hydrogen by supplying (green) electrical energy. Water enters the cell via the bipolar plate on the anode side of the cell. The bipolar plate then distributes the water throughout the entire cell. The water wanders through the porous transport layer (PTL) located beneath the bipolar plate. The PTL is a porous medium which transports the water to the protone exchange membrane (PEM), where the water is split into hydrogen and oxygen. The positively charged hydrogen ions pass through the PEM and are collected at the cathode. Another task of the PTL is to make sure that the water reaches the entire surface of the PEM layer to maximize the hydrogen production. Lastly, the byproduct oxygen exits the cell through the PTL and the anode side bipolar plate. For more information about hydrogen electrolysis cells, particularly PEM electrolysis cells, we refer to \cite{Metz2023}.

To introduce the model, we make slight adaptations to the Borrvall-Petersson fluid flow model from Section \ref{ssec:borvall_petersson} to model the flow throughout the bipolar plate, and present the concept of a uniform flow distribution inside such plates. Furthermore, we state the full topology optimization problem and its topological derivative. Finally, we apply our novel deflation approach from Section \ref{sec:deflation} to the topology optimization problem for the bipolar plate and show that it allows for the computation of multiple innovative designs for such plates.

\begin{figure}
\centering
\begin{overpic}[width=0.6\textwidth]{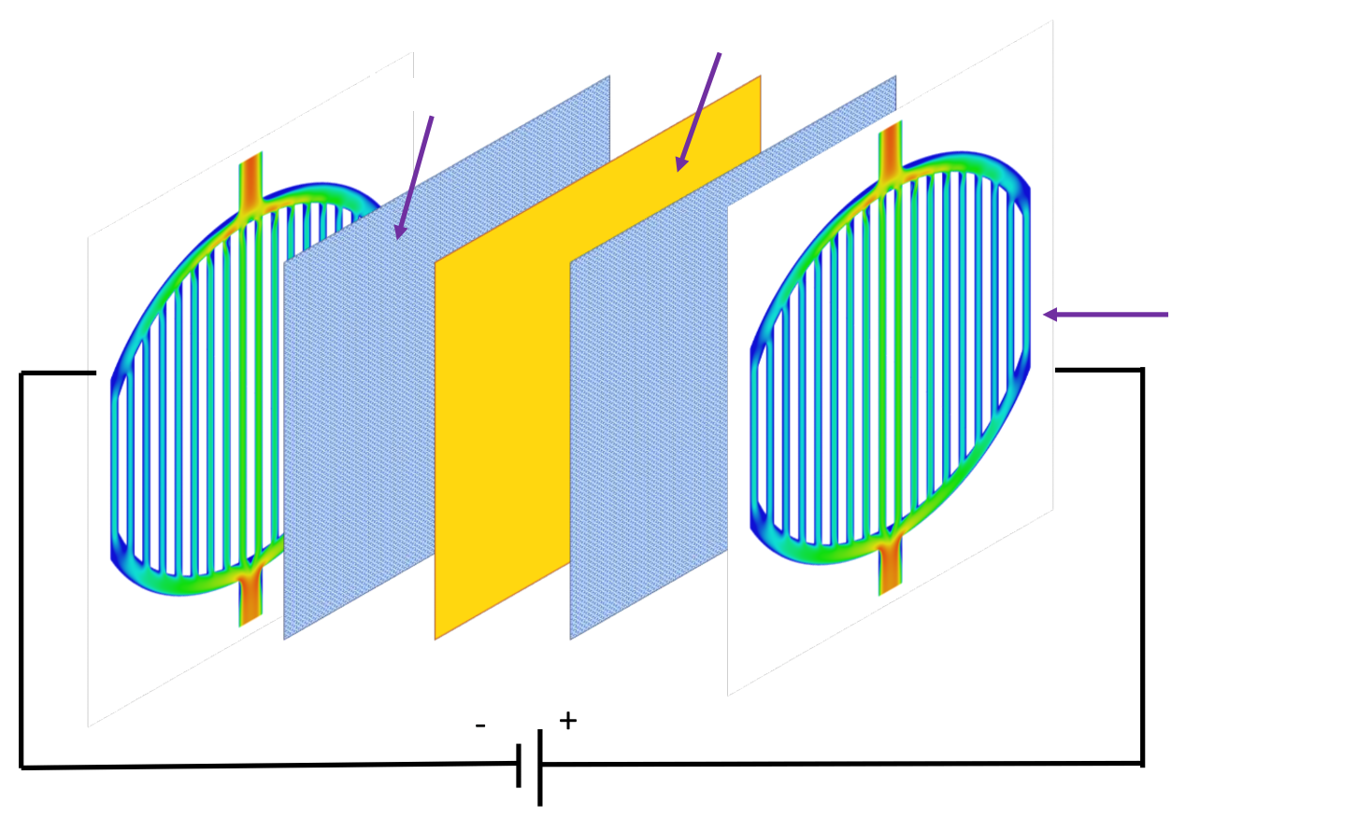}
	\put (15,6) {Cathode}
	\put (55,6) {Anode}
	\put (87,36) {Bipolar Plate}
	\put (45,58) {Porous Exchange Membrane (PEM)}
	\put (-10,53) {Porous Transport Layer (PTL)}
\end{overpic}
\caption{Exemplary layout of a hydrogen electrolysis cell, we refer to \cite{Blauth2023BPP} for the layout of the bipolar plate.}
\label{fig:bpp}
\end{figure}

\subsection{The Fluid Flow Model}

As mentioned previously, the fluid flow throughout the bipolar plate is described by the generalized Stokes equation from Section \ref{ssec:borvall_petersson}, with slight changes to the boundary conditions. We assume that the boundary $\partial \holdall$ consists of three parts: The inflow boundary $\Gamma_{\mathrm{in}}$, where an inflow profile $u_{\textrm{in}}$ is applied, the boundary $\Gamma_{\mathrm{wall}}$, where a no-slip condition is prescribed, and finally the outflow boundary $\Gamma_{\mathrm{out}}$, where a do-nothing condition is applied. By using this natural outflow condition, the pressure is already uniquely determined, and thus we do not need to impose the last condition of $(\ref{eq:stokesdarcy})$ anymore. The non-dimensional fluid flow model reads

\begin{equation}
\begin{aligned}
	-\Delta u+ \alpha u + \nabla p &= 0 \quad &&\text{ in } \holdall, \\
	\mathrm{div}(u) &= 0 \quad &&\text{ in } \holdall,\\
	u &= u_{\mathrm{in}} \quad &&\text{ on } \Gamma_{\mathrm{in}},\\
	u &= 0 \quad &&\text{ on } \Gamma_{\mathrm{wall}},\\
	\partial_n u- pn &= 0 \quad &&\text{ on } \Gamma_{\mathrm{out}}, 
\end{aligned}
\label{eq:stokesdarcy2}
\end{equation}
where $u:\holdall\rightarrow\R^d$ and $p:\holdall\rightarrow\R$ are, again, the velocity and the pressure, respectively. In the equations above, $n$ denotes the outer unit normal vector on $\partial \holdall$. Furthermore, $\partial_n u$ denotes the normal derivative of the vector field $u$, which is defined as the application of the Jacobian $Du$ to $n$, i.e. $\partial_n u = (Du)n$.

\subsection{Uniform Flow Distribution}
\label{ssec:uniform_flow}

As the efficiency of hydrogen electrolysis cells is highly dependent on the fluid flow throughout the bipolar plate, it is essential to ensure that the fluid is distributed uniformly across the entire cell \cite{Barreras2005BPP, Barreras2008BPP, Manso2012BPP}. This is due to the fact that areas with low fluid velocity or dead spots can degrade the hydrogen production by causing water shortage or preventing the transport of oxygen out of the cell. In this section, we introduce the concept of a uniform flow distribution, following the approach presented in \cite{Baeck2023Topology}. 

As we want to perform topology optimization for the bipolar plate, the fluid area $\Omega$ is not known in advance, making it difficult to mathematically characterize a uniform flow distribution. Since low fluid velocities degrade the cell efficiency, we introduce a threshold velocity magnitude $U_t\in\R_{>0}$. Our goal is to ensure that the fluid flow in $\Omega$ exceeds this target velocity magnitude. We define a flow in this context as uniform if the following condition is satisfied
\begin{equation}
\label{eq:optgoal}
\abs{u}(x)\geq U_t  \text{ for almost all } x\in \Omega,
\end{equation}
where $\abs{\cdot}$ denotes the Euclidean norm of $\R^d$. This criterion ensures that the entire bipolar plate is covered by a sufficiently large flow and that no dead spots occur in the flow field. However, there are two important aspects to consider.


As the velocity would vanish in a real solid, we expect the velocity described by $(\ref{eq:stokesdarcy2})$ to be negligibly small in the solid parts of the domain $\holdall$. Consequently, the flow velocity decreases as we approach the fluid area boundaries. As a result, the condition stated in equation $(\ref{eq:optgoal})$ is not achievable near the boundaries of the fluid area $\Omega$.

Moreover, since our definition of uniformity for a fluid flow only applies to the fluid area of a domain, we have no control over the solid parts of the hold-all domain $\holdall$. This can lead to the formation of large solid inclusions, as observed in numerical tests. The problem with these large objects is that they obstruct the fluid flow in the PTL. As a result, the cell efficiency decreases.  On the other hand, if we consider small obstacles, the flow in the PTL is not hindered, enabling the entire PTL area to be used for splitting water into hydrogen and oxygen, thereby maintaining the cell efficiency.

Therefore, we aim to extend the threshold velocity goal $(\ref{eq:optgoal})$ to the hold-all domain $\holdall$ to avoid large solid inclusions and promote small obstacles. To accomplish this, we introduce a smoothed velocity field, denoted as $u_s$, which is computed by employing a smoothing technique on the actual velocity. It can be obtained as the solution of the following parabolic PDE
\begin{equation}
\begin{aligned}
	\partial_tu_s-\Delta u_s &= 0 \quad &&\text{ in } \holdall\times(0,T), \\
	\partial_n u_s &= 0 \quad &&\text{ on } \partial \holdall, \\
	u_s(\cdot,0) &= u \quad &&\text{ in } \holdall,
\end{aligned}
\label{eq:smoothing}
\end{equation}
where $u$ represents the actual velocity, which is given by the solution of $(\ref{eq:stokesdarcy})$. Here, $T>0$ acts as the final time. For numerical simplicity, we use a single step of the implicit Euler method with stepsize $\Delta t=T$. We arrive at the elliptic system
\begin{equation}
\begin{aligned}
	\frac{u_s-u}{\Delta t}-\Delta u_s &= 0 \quad &&\text{ in } \holdall, \\
	\partial_n u_s &= 0 \quad &&\text{ on } \partial \holdall. 
\end{aligned}
\label{eq:heat1step}
\end{equation}
Using our definition of uniformity $(\ref{eq:optgoal})$ and applying it to the smoothed velocity $u_s$ and the entire hold-all domain $\holdall$, we obtain the optimization goal for our topology optimization problem for the bipolar plate
\begin{equation}
\label{eq:optgoal2}
\abs{u_s}(x)\geq U_t \text{ for almost all } x\in \holdall.
\end{equation}
To achieve a smoother representation, we square both sides of the previous constraint
\begin{equation}
\label{eq:optgoal3}
\abs{u_s}(x)^2\geq U_t^2 \text{ for almost all } x\in \holdall,
\end{equation}
which is equivalent to $(\ref{eq:optgoal2})$ due to the monotony of the square. Finally, by applying a Moreau-Yosida regularization \cite{Hinze2009Optimization} to $(\ref{eq:optgoal3})$, we arrive at our objective function for the topology optimization problem
\begin{equation}
\label{eq:objective}
J(\Omega,u_s) = \int_{\holdall} \min \left( 0, \abs{u_s}^2 - U_t^2 \right)^2 \diff x.
\end{equation}

We comment once again on this smoothing technique. For a more detailed discussion, we refer the reader to \cite{Baeck2023Topology}. As mentioned before, small solid inclusions do not hinder the flow in the underlying PTL, and therefore, they do not lead to a degeneration of the cell efficiency. As the smoothing procedure extents the velocity also to the solid region, the criterion in equation $(\ref{eq:optgoal2})$ is expected to be satisfied for these small inclusions, promoting their presence. On the other hand, for large solid areas, the smoothed velocity flow is still hindered by them, resulting in a decrease in the smoothed velocity in those areas. Consequently, the threshold velocity goal in equation $(\ref{eq:optgoal2})$ will not be fulfilled in these regions. Thus, we do not expect large obstacles to appear in the final shapes. 

Overall, the smoothed velocity can be seen as an approach to model the flow in the underlying PTL, and it provides a better representation of bipolar plates. For density-based topology optimization, a similar smoothing technique is referred to as Helmholtz filtering \cite{Lazarov2011Topology}, where the smoothing parameter plays a similar role to the filtering radius.

The choice of the smoothing parameter $\Delta t$ controls the effect of the smoothing equation for the final shapes. A higher influence of the smoothing is expected for larger $\Delta t$ values, allowing for larger solid inclusions in the layouts. For smaller time steps $\Delta t$, smaller structures are required to satisfy $(\ref{eq:optgoal2})$, resulting in smaller inclusions in the final shapes. The influence of the step length has already been investigated in our previous work \cite{Baeck2023Topology}.

\begin{remark}
The (distributional) solution of the heat equation $(\ref{eq:smoothing})$ is described by a convolution of the fundamental solution of the heat equation (heat kernel) with the initial data, if it is sufficiently smooth, see e.g. \cite{Evans2010Partial}. This fundamental solution can be interpreted as the density function of the normal distribution with standard deviation $\sigma=\sqrt{2t}$ and expected value $\mu=0$ with $0<t<T$. By setting $t=\Delta t$, we obtain the standard deviation for our velocity smoothing. Approximately $99\%$ of the mass of this density function lies within the interval $[-2.58\sigma, 2.58\sigma]$. Therefore, considering this rough estimate, obstacles larger than $5.16\sigma=5.16\sqrt{2\Delta t}$ can not fulfill the constraint in equation $(\ref{eq:optgoal2})$, where the size of an obstacle is measured orthogonal to the direction of the fluid flow. These considerations confirm the direct connection between the obstacle sizes in the final shapes and the chosen step size $\Delta t$.
\label{rmk:smoothing2}
\end{remark}
\begin{remark}
The smoothing approach established in $(\ref{eq:heat1step})$ can be interpreted as a modeling of the underlying porous transport layer, which could be described by equations for porous media. Thus, it would be of interest to justify the smoothing equation by exploring connections to the porous medium equation and giving an interpretation of the step size $\Delta t$ in this context. These considerations are beyond the scope of this paper and a topic of future research.
\end{remark}

\subsection{The Topology Optimization Problem}
\label{ssec:top_opt_problem}

We summarize equations $(\ref{eq:stokesdarcy})$, $(\ref{eq:heat1step})$ and $(\ref{eq:objective})$ and state our topology optimization problem for achieving a uniform flow distribution in bipolar plates, as presented in \cite{Baeck2023Topology},
\begin{equation}
\label{eq:optproblem}
\begin{aligned}
	&\min_{\Omega} J(\Omega,u_s) = \int_{\holdall} \min \left( 0, \abs{u_s}^2 - U_t^2 \right)^2 \diff x \\
	&\text{s.t.} \quad \
	\begin{alignedat}[t]{2}
		-\Delta u+ \alpha u + \nabla p &= 0 \quad &&\text{ in } \holdall, \\
		\mathrm{div}(u) &= 0 \quad &&\text{ in } \holdall,\\
		u &= u_{\mathrm{in}} \quad &&\text{ on } \Gamma_{\mathrm{in}},\\
		u &= 0 \quad &&\text{ on } \Gamma_{\mathrm{wall}},\\
		\partial_n u- pn &= 0 \quad &&\text{ on } \Gamma_{\mathrm{out}}, \\
		\frac{u_s-u}{\Delta t}-\Delta u_s &= 0 \quad &&\text{ in } \holdall,\\
		\partial_n u_s &= 0 \quad &&\text{ on } \partial \holdall, \\
		V_L \leq |\Omega| &\leq V_U,
	\end{alignedat}
\end{aligned}
\end{equation}
where $0 \leq V_L \leq V_U \leq |\holdall|$ are positive constants that represent the lower and upper bounds on the fluid volume, respectively. To apply the level-set approach numerically, as described in Section $\ref{ssec:top_opt_algorithm}$, to our model problem, we state the generalized topological derivative of $(\ref{eq:optproblem})$, which is given by
\begin{equation}
\label{eq:topder}
\topderivgen J(x)=-(\alpha_U-\alpha_L)u(x)\cdot v(x)
\end{equation}
for all $x\in \holdall\setminus\partial\Omega$. Here, $u$ is the weak solution of $(\ref{eq:stokesdarcy})$. The adjoint smoothed velocity $v_s$ solves the equation
\begin{equation}
\begin{aligned}
	\frac{1}{\Delta t}v_s-\Delta v_s &= -4u_s\min\left(0,\abs{u_s}^2-U_t^2\right) \quad &&\text{ in } \holdall, \\
	\partial_n v_s &= 0 \quad &&\text{ on } \partial \holdall,
\end{aligned}
\label{eq:heatadjoint}
\end{equation}
where $u_s$ is the solution of $(\ref{eq:heat1step})$. The adjoint velocity $v$ in $(\ref{eq:topder})$ solves
\begin{equation}
\begin{aligned}
	-\Delta v+ \alpha v + \nabla q-\frac{1}{\Delta t}v_s &= 0 \quad &&\text{ in } \holdall, \\
	\mathrm{div}(v) &= 0 \quad &&\text{ in } \holdall, \\
	v &= 0 \quad &&\text{ on } \Gamma_{\text{in}}\cup\Gamma_{\text{wall}}, \\
	\partial_n v- qn &= 0 \quad &&\text{ on } \Gamma_{\text{out}}.
\end{aligned}
\label{eq:stokesadj}
\end{equation}
A rigorous derivation of the topological derivative using, e.g., an averaged adjoint approach, see e.g. \cite{Sturm2020Topology}, is beyond the scope of this paper and a topic of future research.

\subsection{Numerical Results}
\label{ssec:numerical_results}

We turn towards the numerical solution of the topology optimization problem for the bipolar plate $(\ref{eq:optproblem})$ and the application of our novel deflation approach from Section \ref{sec:deflation}. As before, the source code for this example is available publicly on GitHub \cite{Baeck2024Software}. We demonstrate that our model and the deflation technique lead to the discovery of novel bipolar plate designs, which may improve the efficiency of such cells.

We introduce the setting for the numerical investigation of the bipolar plate. The layout of our hold-all domain $\holdall$ is depicted in Figure \ref{fig:model}. We use a uniform triangular mesh consisting of \num{11401} nodes and \num{22500} elements for the discretization of $\holdall$. The numerical setup is the same as described in Section \ref{ssec:implementation}, thus, we use LBB-stable Taylor-Hood finite elements for the pressure and velocity. Additionally, we use quadratic Lagrange elements to discretize the smoothing equation $(\ref{eq:heat1step})$ to keep the smoothed velocity consistent with the actual one. This leads to a linear system with \num{90602} unknowns for the velocity and the smoothed velocity and \num{11401} for the pressure. The volume constraint in $(\ref{eq:optproblem})$ is handled in the same way as before as well. The inflow profile $u_{\textrm{in}}$, which is applied on the inflow boundary $\Gamma_{\textrm{in}}$, reads
\begin{equation*}
u_{\mathrm{in}}=\begin{bmatrix}
	-\frac{400}{9}(y-\frac{7}{20})(y-\frac{13}{20})\\
	0
\end{bmatrix} \; \mathrm{for}\ x=0 \ \mathrm{and} \ \frac{7}{20}\leq y\leq \frac{13}{20}.
\end{equation*}
For the inverse permeabilities we choose the values
\begin{equation*}
\alpha_L=\frac{2.5}{100^2}, \;\; \alpha_U=\frac{2.5}{0.0025^2}.
\end{equation*}
The values for the boundaries of the volume constraint read $V_L=0.5$ and $V_U=0.7$, respectively. The threshold velocity is chosen as $U_t=0.1$.

\begin{figure}
\centering
\begin{tikzpicture}[scale=0.5]
	\draw[line width = 0.35mm] (0,0) -- (10,0) -- (10,10) -- (0,10) -- (0,0);
	\draw[black, line width = 0.30mm]   plot[smooth,domain=3.5:6.5] ({-(\x-3.5)*(6.5-\x)}, \x);
	\draw[black, line width = 0.30mm]   plot[smooth,domain=3.5:6.5] ({(\x-3.5)*(6.5-\x)+10}, \x);
	\draw[black, line width = 0.30mm, ->] (-1.6875,5.75) -- (0,5.75);
	\draw[black, line width = 0.30mm, ->] (-2.25,5) -- (0,5);
	\draw[black, line width = 0.30mm, ->] (-1.6875,4.25) -- (0,4.25);
	\draw[black, line width = 0.30mm, ->] (10,5.75) -- (11.6875,5.75);
	\draw[black, line width = 0.30mm, ->] (10,5) -- (12.25,5);
	\draw[black, line width = 0.30mm, ->] (10,4.25) -- (11.6875,4.25);
	\draw[black, line width = 0.30mm, <->] (0.5,5) -- (0.5,10);
	\draw[black, line width = 0.30mm, <->] (0.5,0) -- (0.5,3.5);
	\draw[black, line width = 0.30mm, <->] (0,10.5) -- (10,10.5);
	\draw[black, line width = 0.30mm, <->] (13,0) -- (13,10);
	\node[] (a) at (5,5) {$\holdall$};
	\node[] (b) at (14,5) {$1.0$};
	\node[] (b) at (5,11.25) {$1.0$};
	\node[] (b) at (1.75,1.75) {$0.35$};
	\node[] (b) at (1.5,7.5) {$0.5$};
\end{tikzpicture}
\caption{Schematic setup of the hold all domain $\holdall$ for the topology optimization of a bipolar plate $(\ref{eq:optproblem})$.}
\label{fig:model}
\end{figure}
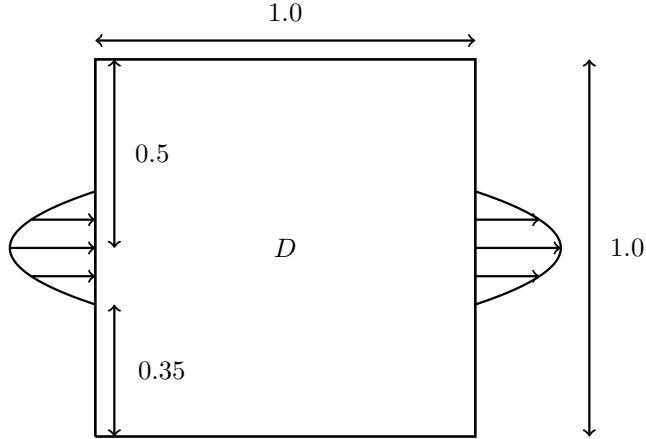

We apply our deflation approach to the bipolar plate topology optimization problem $(\ref{eq:optproblem})$. The parameters for the penalty function are chosen as $\gamma=0.25$ and $\delta=5\cdot10^{-3}$. Based on the magnitude of the objective function, the order of the penalty parameter is comparable to the choice in Section \ref{ssec:numerical_results_5holes}. Applying Algorithm $\ref{algo:deflation}$ leads to the discovery of 64 local minimizers of $(\ref{eq:optproblem})$, which are displayed in Figure \ref{fig:deflation_bpp}. We performed a total number of 200 iterations of Algorithm \ref{algo:deflation} to compute these 64 local minimizers, thus the deflation procedure discovered a new local minimizer approximately every 3 iterations. The solid area is colored in orange, and the fluid part of the domain $\holdall$ is colored in blue. The shapes are ordered according to their discovery in Algorithm $\ref{algo:deflation}$. Furthermore, we included the percentage of the area of the domain $\holdall$ where the target velocity goal $(\ref{eq:optgoal2})$ for the smoothed velocity is satisfied. Here, the percentages for shapes that perform well are highlighted in orange, while those that do not perform well are marked in blue.


It is important to mention that our procedure finds minimizers in a systematic way, which is apparent considering Figure \ref{fig:deflation_bpp}. After finding a minimizer, the algorithm discovers local solutions that are similar in appearance, this can be observed, e.g., in shapes 4 to 7 in Figure \ref{fig:deflation_bpp}. Furthermore, considering shapes $48$ and $49$ in Figure \ref{fig:deflation_bpp}, the procedure also discovers local minimizers that do not meet the distance threshold in the penalty function. This is achieved by performing the restart in step $11$ of Algorithm $\ref{algo:deflation}$ and highlights that our procedure functions independently of the choice of the threshold parameter $\gamma$. Once no new local solutions can be found in a certain ``path'', the algorithm exits this ``path'' and finds a new one. This may require multiple iterations of the deflation procedure, effectively increasing the penalty parameter $\delta$. Finally, the procedure repeats itself. Summarizing, we find minimizers in bunches without the necessity to adapt the penalty parameter.

Our procedure is capable of finding local minimizers of $(\ref{eq:optproblem})$ that significantly outperform the initial local solution in terms of the threshold velocity goal for the smoothed velocity $(\ref{eq:optgoal2})$. In fact, our globally best performing minimizer is the $17^\text{th}$ local minimizer discovered, which coincides with the $41^\text{st}$ iteration of Algorithm $\ref{algo:deflation}$. Furthermore, even the last two local minimizers shown in Figure $\ref{fig:deflation_bpp}$, computed in iteration $198$ and $200$ of Algorithm $\ref{algo:deflation}$, respectively, exhibit a significant improvement of the fulfillment of $(\ref{eq:optgoal2})$ compared to the initial minimizer. This highlights the necessity of deflation for this explicit problem. Moreover, it also demonstrates the stability of our procedure as it continues to discover globally well-performing local minimizers in the later iterations of Algorithm $\ref{algo:deflation}$ while considering multiple penalty functions. In Figure \ref{fig:objective} the constraint gaps for each local minimizer displayed in Figure $\ref{fig:deflation_bpp}$ are summarized, where the constraint gap is given by the difference of $100\%$ and the fulfillment of the constraint $(\ref{eq:optgoal2})$. 

In Figure \ref{fig:bpp_iterations} the number of solver iterations for each iteration of the deflation procedure is displayed. As before, the number of iterations to solve the perturbed topology optimization problems is given in orange and the total number of iterations is depicted in blue. As previously observed, the increase in iterations for penalty terms is not significant, and consequently, the decline in the conditioning of the underlying optimization problems is not too significant.

Additionally, we mention that with the deflation approach we are able to discover characteristics of designs that are already used in actual hydrogen electrolysis cell. These characteristics include canal (i.e. shapes \num{39} and \num{55} in Figure \ref{fig:deflation_bpp}) or pin structures (i.e. shapes \num{44} or \num{58} in Figure \ref{fig:deflation_bpp}), we refer to \cite{Kahraman2017BPP, Manso2012BPP, Wang2012BPP} for examples of those designs. Furthermore, our procedure computes shapes that feature a combination of those typical characteristics and even completely new designs, e.g. our global optimizer or shape $20$. This suggests that our deflation approach is suitable for industrial applications and gives rise to the possibility to discover novel bipolar plate design.

\begin{figure}
\centering
\captionsetup[subfigure]{justification=centering}
\captionsetup{justification=centering}
\begin{subfigure}[t]{0.49\textwidth}
	\centering
	\includegraphics[width=\textwidth]{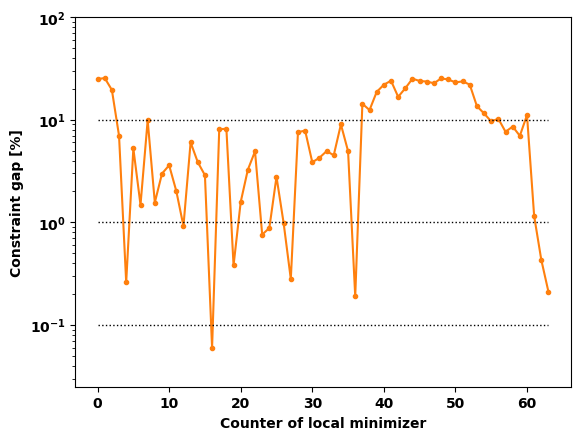}
	\caption{Constraint gap for the smoothed velocity goal $(\ref{eq:optgoal2})$ for each local minimizer of $(\ref{eq:optproblem})$ displayed in Figure \ref{fig:deflation_bpp}. The constraint gap is given by the difference of $100\%$ and the fulfillment of the constraint $(\ref{eq:optgoal2})$, thus lower values indicate better performance.}
	\label{fig:objective}
\end{subfigure}
\hfill
\begin{subfigure}[t]{0.49\textwidth}
	\centering
	\includegraphics[width=\textwidth]{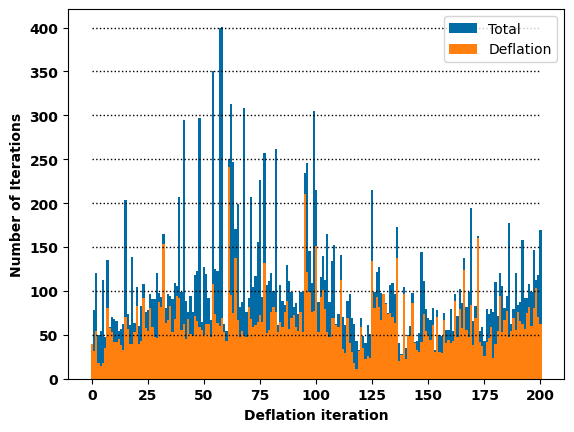}
	\caption{Number of solver iterations for each iteration of the deflation procedure. The iterations for the perturbed problem are displayed in orange and the total iterations (including the restart procedure) in blue.}
	\label{fig:bpp_iterations}
\end{subfigure}
\caption{Evaluation plots for $(\ref{eq:optproblem})$.}
\label{fig:bpp_2}
\end{figure}

\begin{figure}
\centering
\includegraphics[width=.16\linewidth]{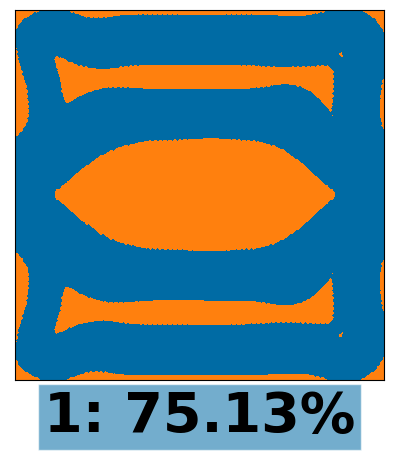}
\includegraphics[width=.16\linewidth]{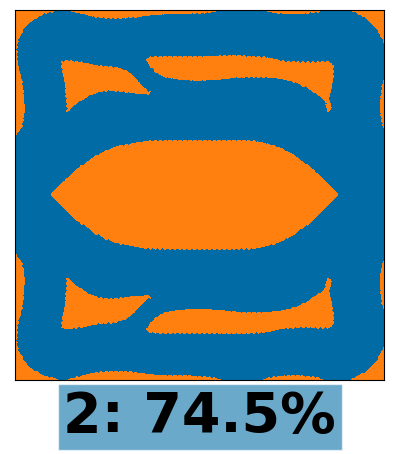}
\includegraphics[width=.16\linewidth]{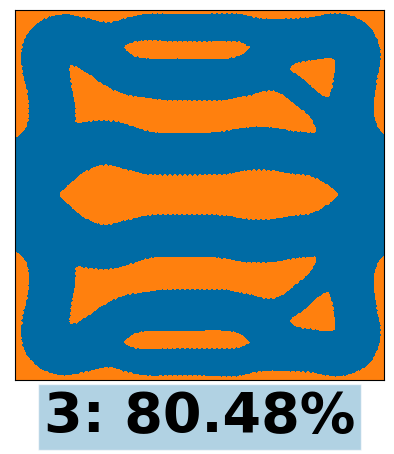}
\includegraphics[width=.16\linewidth]{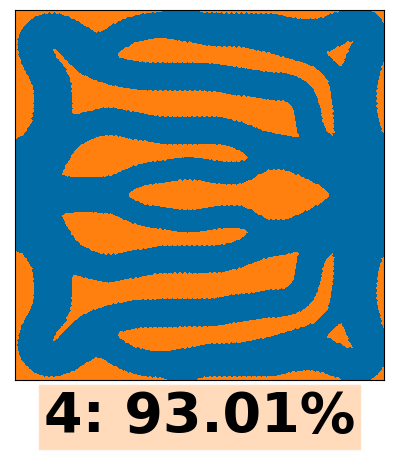}
\includegraphics[width=.16\linewidth]{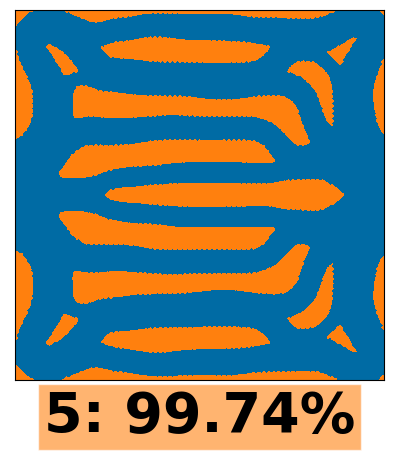}
\includegraphics[width=.16\linewidth]{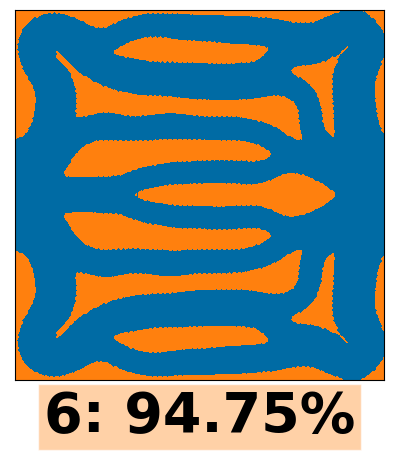}
\includegraphics[width=.16\linewidth]{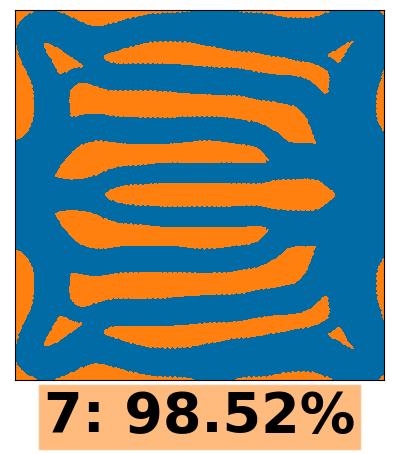}
\includegraphics[width=.16\linewidth]{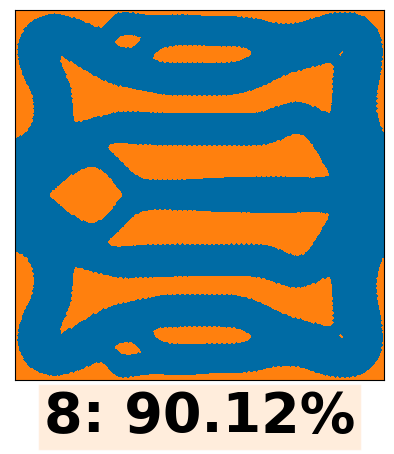}
\includegraphics[width=.16\linewidth]{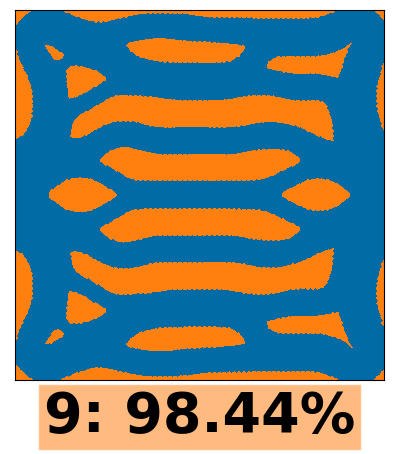}
\includegraphics[width=.16\linewidth]{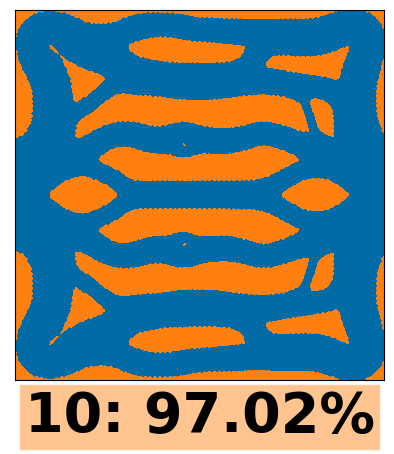}
\includegraphics[width=.16\linewidth]{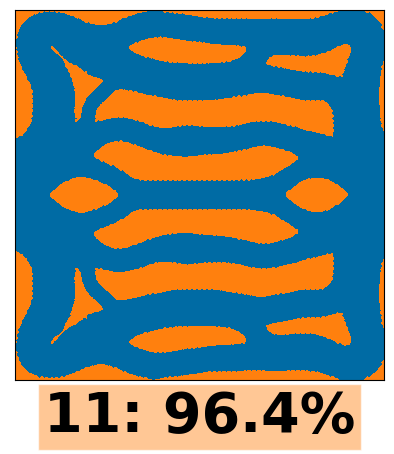}
\includegraphics[width=.16\linewidth]{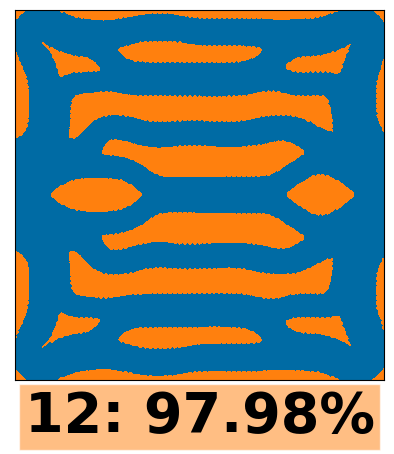}
\includegraphics[width=.16\linewidth]{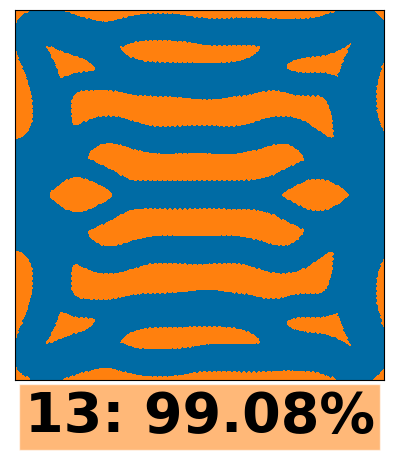}
\includegraphics[width=.16\linewidth]{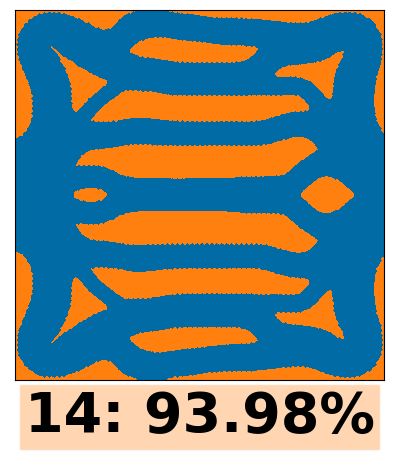}
\includegraphics[width=.16\linewidth]{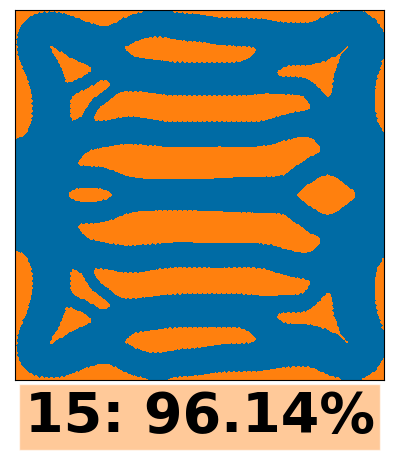}
\includegraphics[width=.16\linewidth]{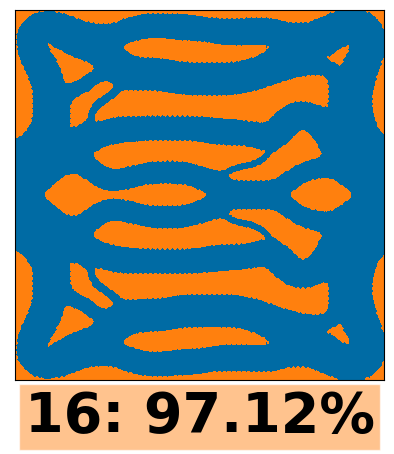}
\includegraphics[width=.16\linewidth]{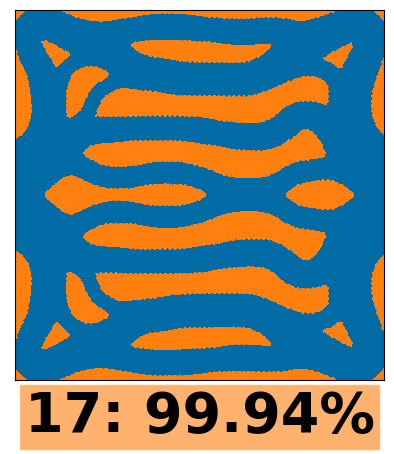}
\includegraphics[width=.16\linewidth]{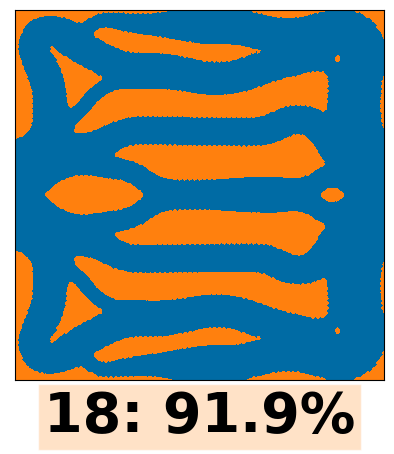}
\includegraphics[width=.16\linewidth]{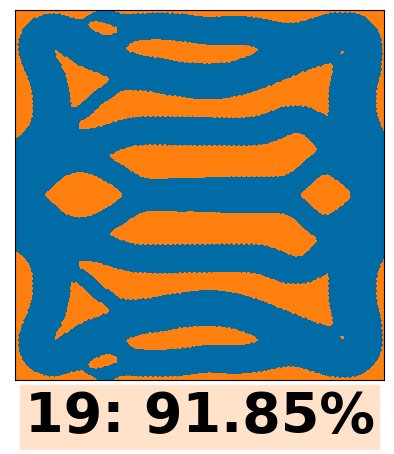}
\includegraphics[width=.16\linewidth]{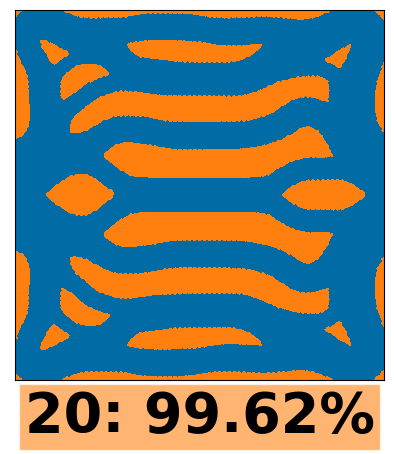}
\includegraphics[width=.16\linewidth]{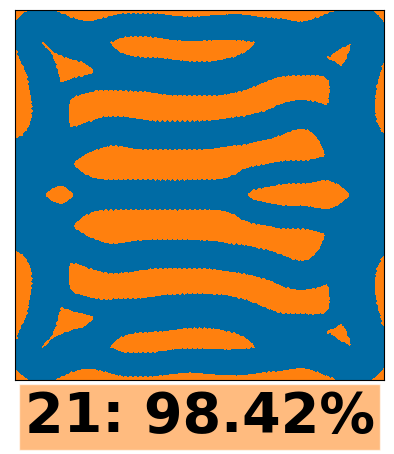}
\includegraphics[width=.16\linewidth]{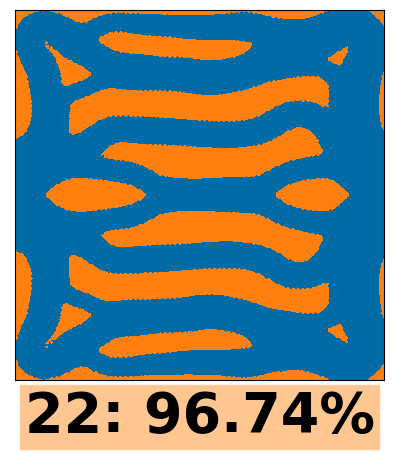}
\includegraphics[width=.16\linewidth]{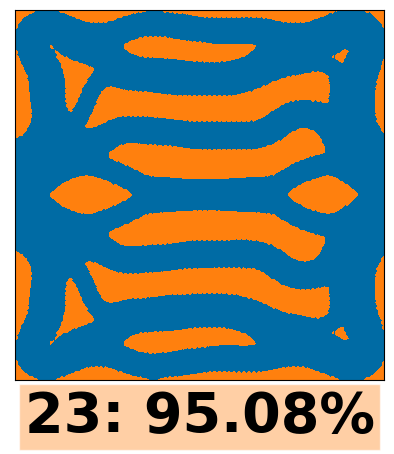}
\includegraphics[width=.16\linewidth]{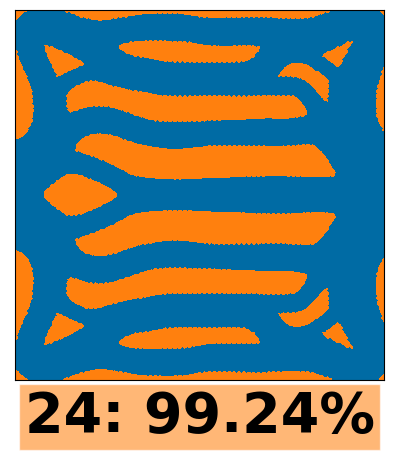}
\includegraphics[width=.16\linewidth]{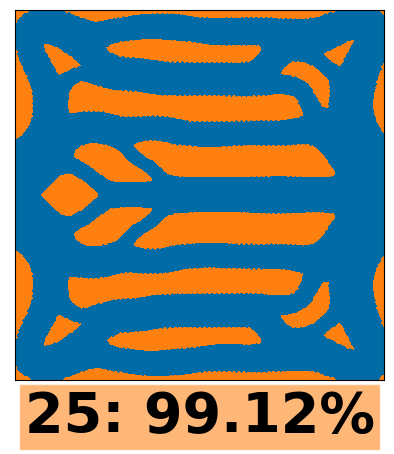}
\includegraphics[width=.16\linewidth]{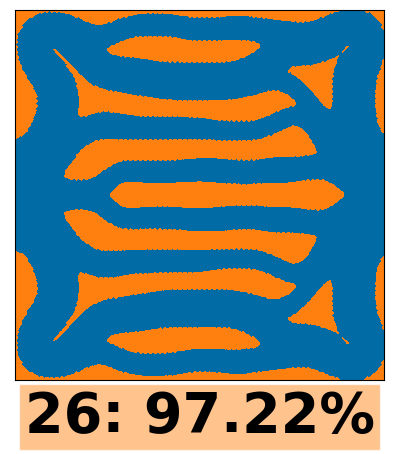}
\includegraphics[width=.16\linewidth]{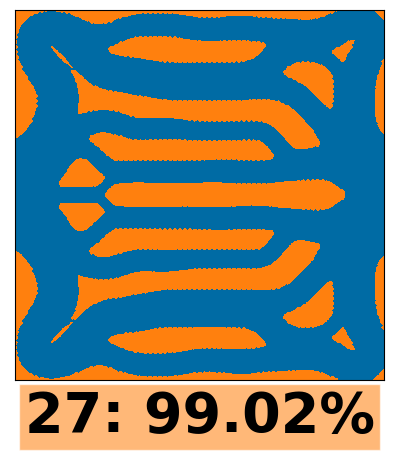}
\includegraphics[width=.16\linewidth]{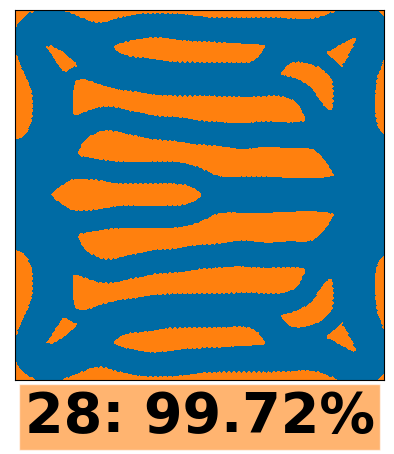}
\includegraphics[width=.16\linewidth]{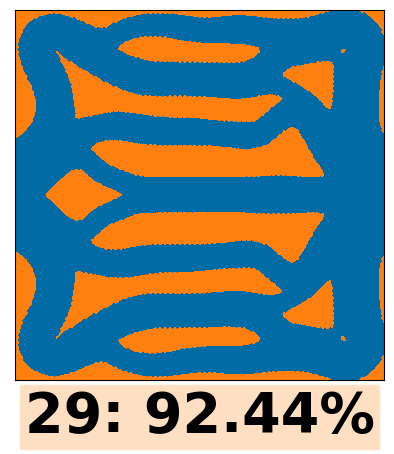}
\includegraphics[width=.16\linewidth]{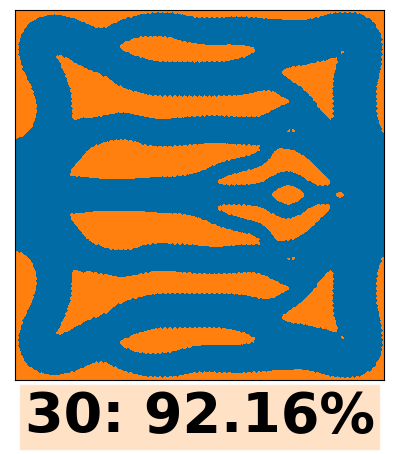}
\includegraphics[width=.16\linewidth]{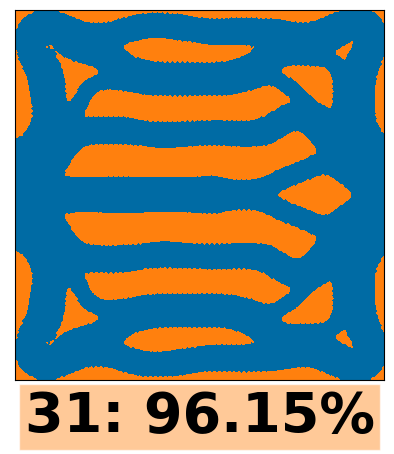}
\includegraphics[width=.16\linewidth]{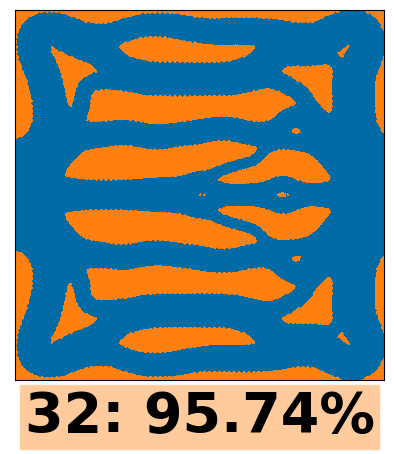}
\includegraphics[width=.16\linewidth]{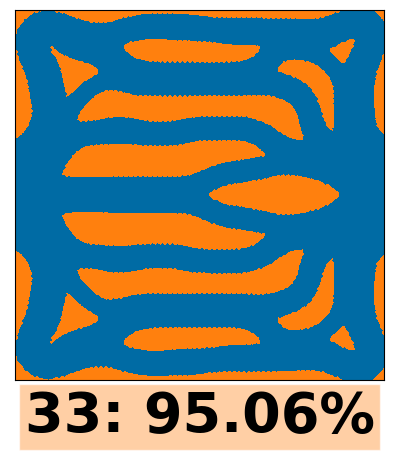}
\includegraphics[width=.16\linewidth]{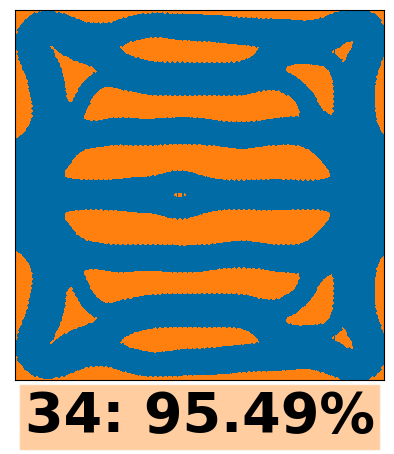}
\includegraphics[width=.16\linewidth]{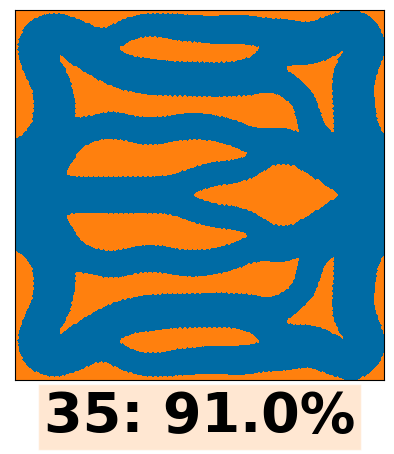}
\includegraphics[width=.16\linewidth]{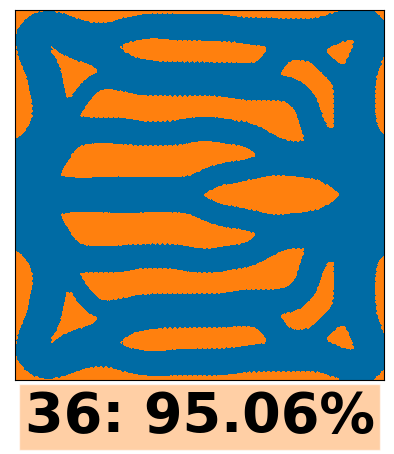}
\includegraphics[width=.16\linewidth]{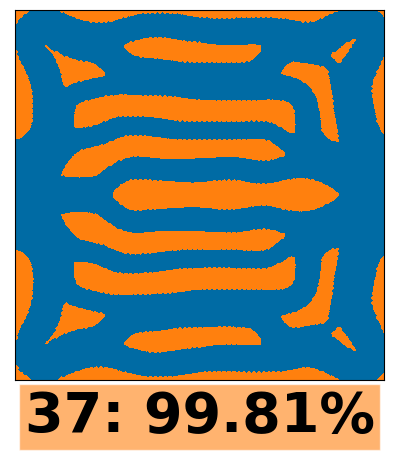}
\includegraphics[width=.16\linewidth]{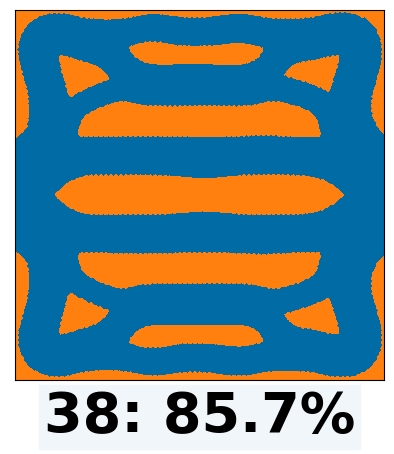}
\includegraphics[width=.16\linewidth]{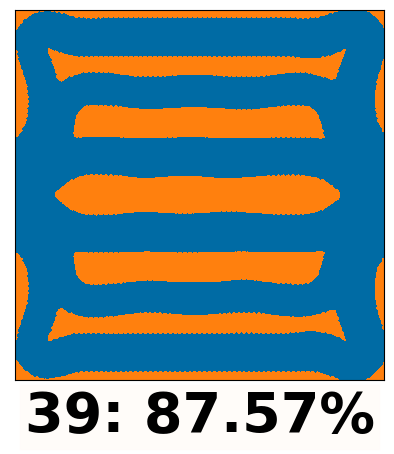}
\includegraphics[width=.16\linewidth]{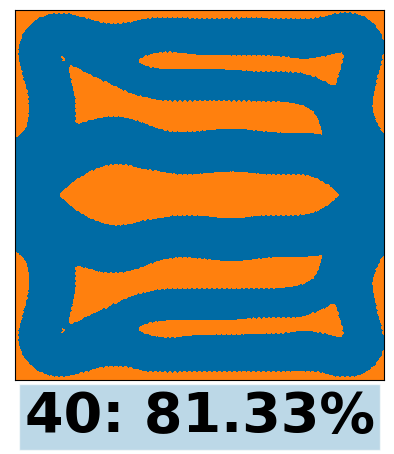}
\includegraphics[width=.16\linewidth]{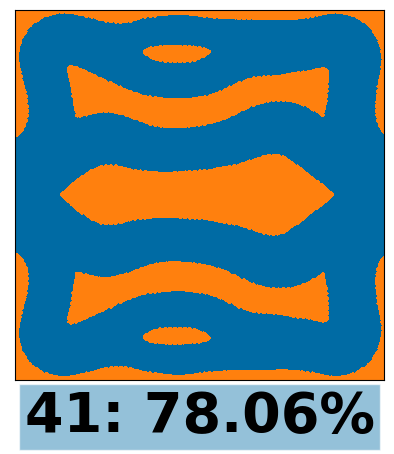}
\includegraphics[width=.16\linewidth]{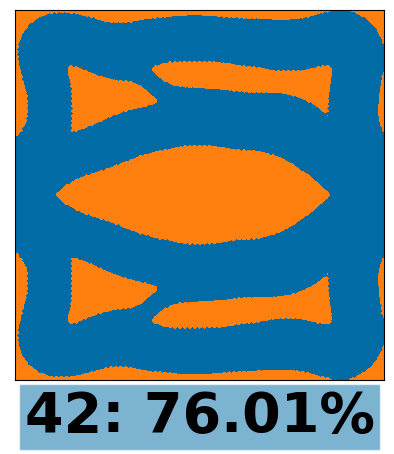}
\caption{Local minimizers of the bipolar plate topology optimization problem $(\ref{eq:optproblem})$.}
\label{fig:deflation_bpp}
\end{figure}

\begin{figure}
\ContinuedFloat
\centering
\includegraphics[width=.16\linewidth]{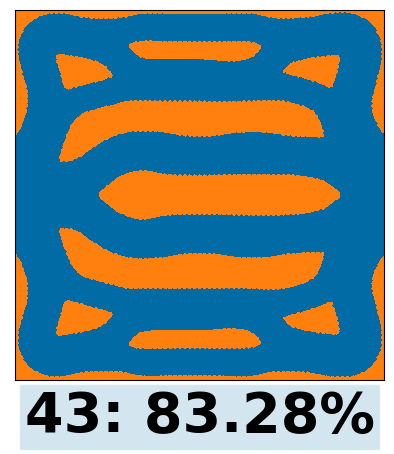}
\includegraphics[width=.16\linewidth]{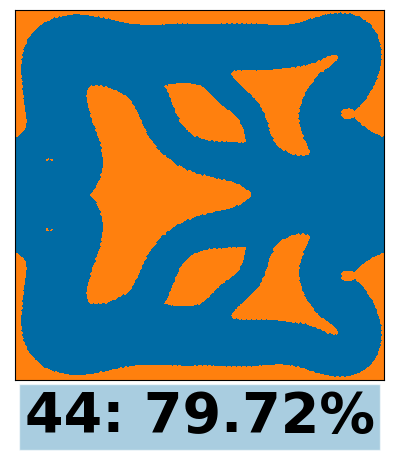}
\includegraphics[width=.16\linewidth]{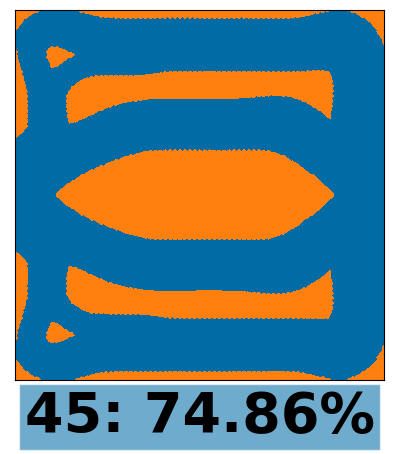}
\includegraphics[width=.16\linewidth]{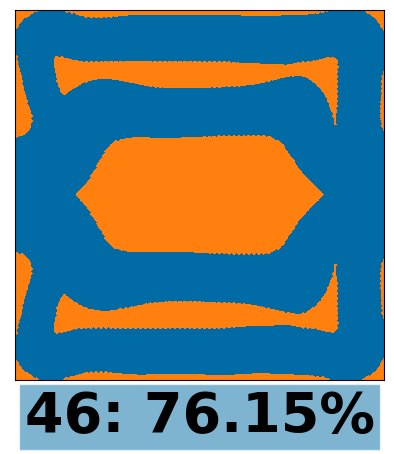}
\includegraphics[width=.16\linewidth]{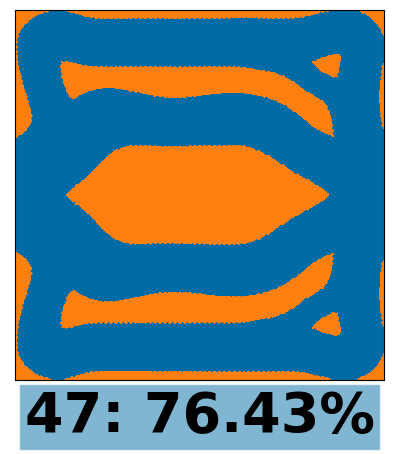}
\includegraphics[width=.16\linewidth]{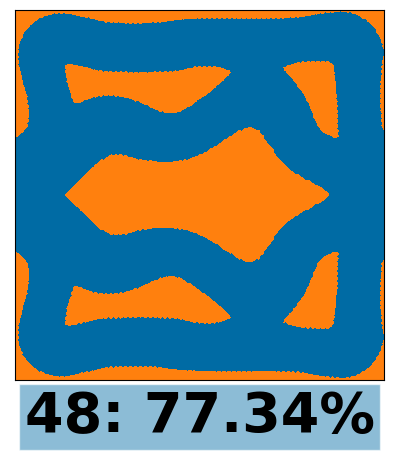}
\includegraphics[width=.16\linewidth]{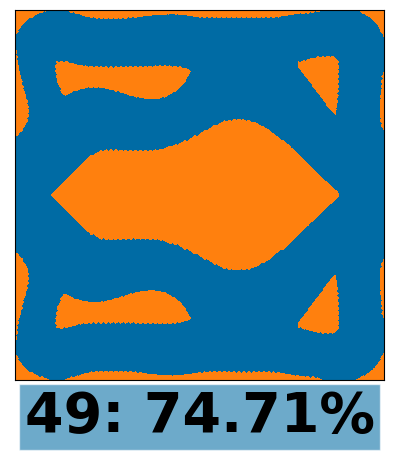}
\includegraphics[width=.16\linewidth]{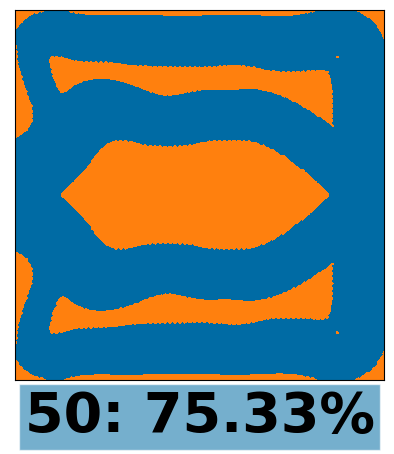}
\includegraphics[width=.16\linewidth]{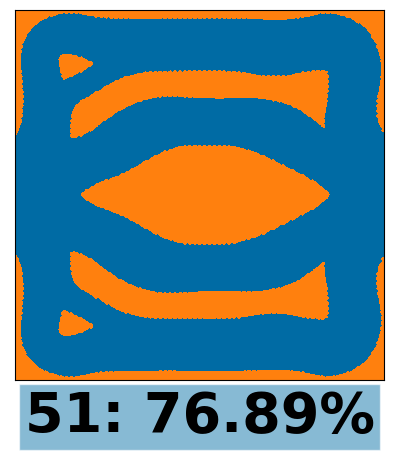}
\includegraphics[width=.16\linewidth]{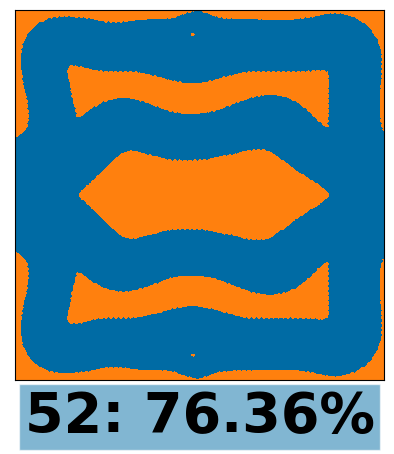}
\includegraphics[width=.16\linewidth]{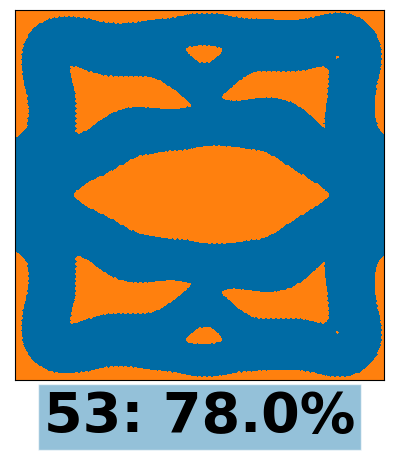}
\includegraphics[width=.16\linewidth]{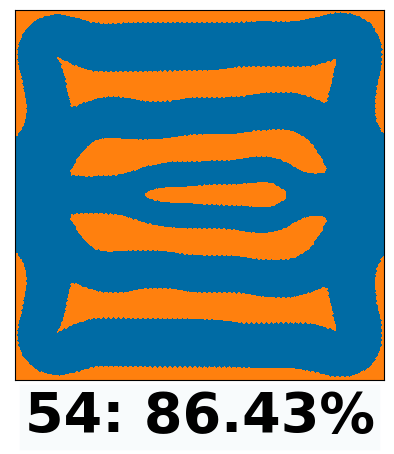}
\includegraphics[width=.16\linewidth]{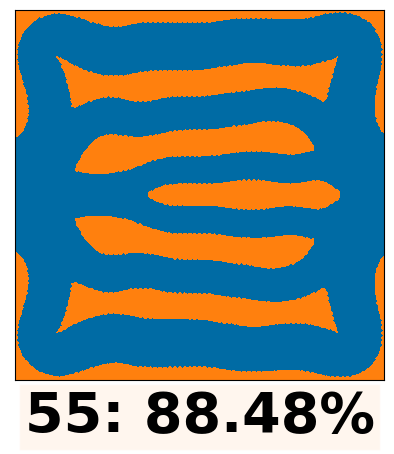}
\includegraphics[width=.16\linewidth]{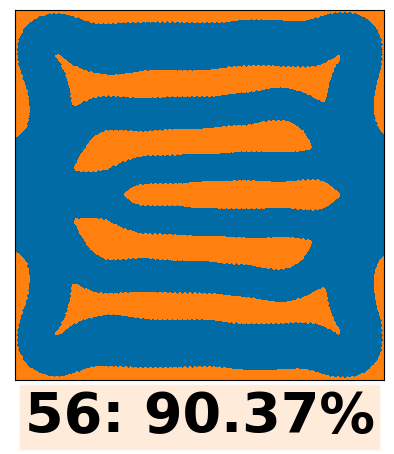}
\includegraphics[width=.16\linewidth]{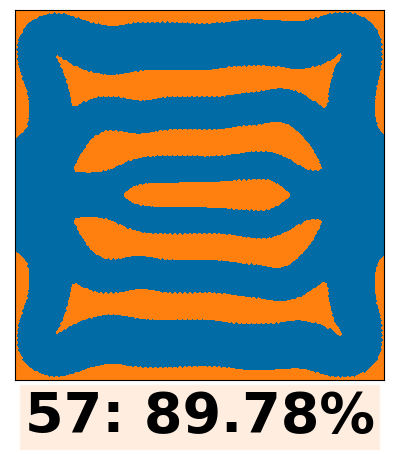}
\includegraphics[width=.16\linewidth]{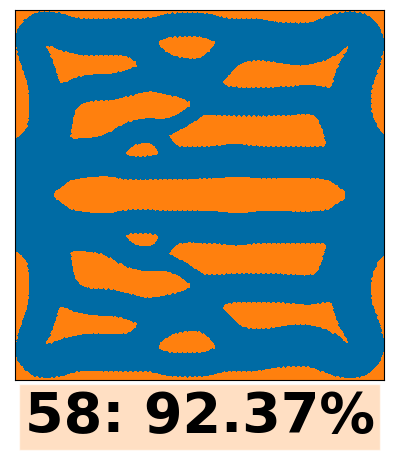}
\includegraphics[width=.16\linewidth]{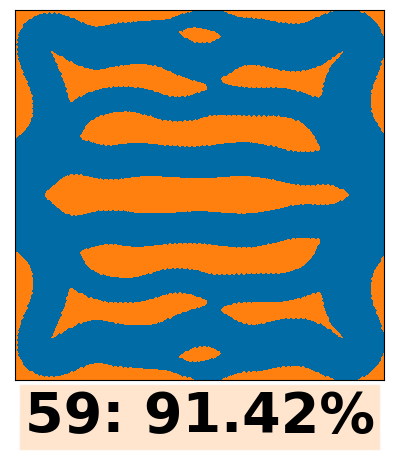}
\includegraphics[width=.16\linewidth]{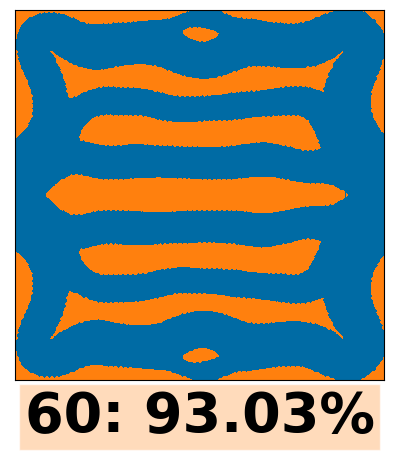}
\includegraphics[width=.16\linewidth]{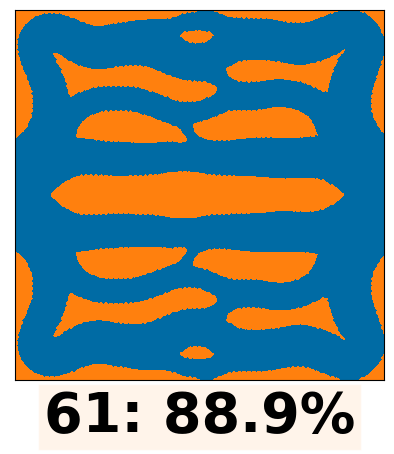}
\includegraphics[width=.16\linewidth]{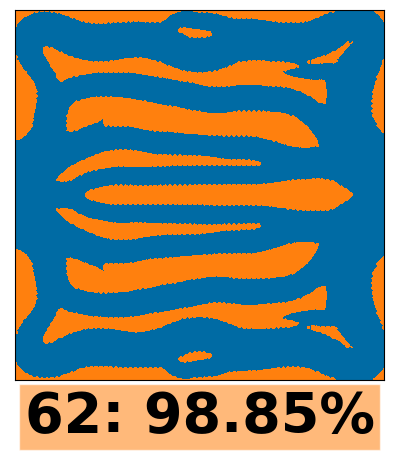}
\includegraphics[width=.16\linewidth]{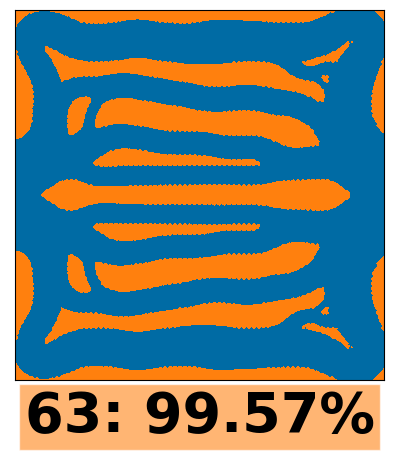}
\includegraphics[width=.16\linewidth]{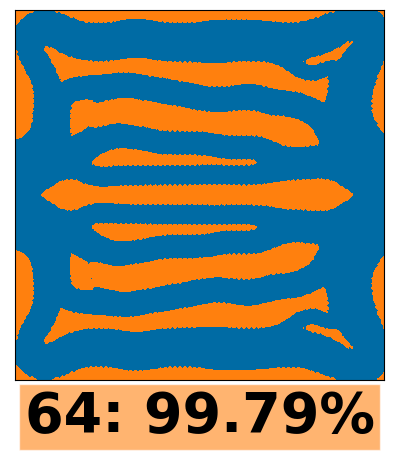}
\caption{Local minimizers of the bipolar plate topology optimization problem $(\ref{eq:optproblem})$. (continued)}
\label{fig:deflation_bpp_cont}
\end{figure}

\section{Conclusion and Outlook}
\label{sec:conclusion}

In this paper, we presented a novel deflation technique for systematically computing multiple local minimizers of topology optimization problems. Our approach penalizes shapes that get closer than a threshold distance to already discovered minimizers. The usage of restarts allows for the discovery of local minimizers that do not meet the distance threshold, which can be seen as a way to systematically compute starting values. Additionally, our approach is not so sensitive to the choice of the penalty parameter. We investigated our technique numerically for the five-holes double-pipe example from literature and successfully found multiple local minimizers. Moreover, our approach indeed allows for the discovery of local minimizers that outperform the initial local solution.

Furthermore, we stated a model problem to improve the flow distribution in bipolar plates of hydrogen electrolysis cells. We applied our novel deflation approach, which led to the computation of 64 different local minimizers of the bipolar plate topology optimization problem, demonstrating the stability of our approach. More importantly, it discovered local minimizers that significantly outperform the initial local minimizer, making it a necessity for this model problem. The deflation approach also allows us to select a local minimizer based on factors such as manufacturability, making it beneficial for practical applications.

For future research, there are several interesting directions one could explore. First, our deflation approach needs to solve more topology optimization problems than the number of minimizers we compute. In fact, in the numerical applications we experienced around $2.5-3$ iterations of our algorithm to discover a new local minimizer. Lowering this number would reduce the computational cost as well as further increase the stability due to a lesser number of penalty functions. In this context, one could also further investigate the selection of suitable parameters for the penalty functions, with the aim of developing a robust method for choosing the parameters. Moreover, employing extensions to the model problem of the bipolar plate are of interest as well. Such extensions might include, e.g., the modeling of the underlying porous transport layer or an extension to the Navier-Stokes equation instead of Stokes equation for the description of the fluid flow. Furthermore, one could investigate the behavior of the model when the inverse permeability $\alpha_U$ tends to infinity, effectively modeling a real solid rather than a porous medium. Lastly, the construction of a model with Dirichlet boundary conditions on the solid inclusions and a comparison to our model would be of interest as well.





\appendix

\section{Shifting of the Level-Set Function}
\label{appendix:a}

In Section $\ref{ssec:implementation}$ we described the numerical implementation of the shifting of the level-set function to fulfill a volume constraint. We assume to be in the setting of Section $\ref{ssec:implementation}$. Let $\Omega\in\mathcal{P}(\holdall)$ be a shape that is described by a non-constant level-set function $\psi:\holdall\rightarrow\mathbb{R}$. Then, we search for a $c\in\mathbb{R}$ such that the shape described by $\psi+c$ fulfills the volume constraint. The following Lemma guarantees that such a $c$ exists whenever the level-set function is not constant almost everywhere.

\begin{lemma}
Let $\holdall\subset\mathbb{R}^d$ be an open, bounded and simply connected set with $d\in\mathbb{N}_{>0}$. We consider a continuous, bounded function $f:\holdall\rightarrow\mathbb{R}$. Additionally, we assume that the set where $f$ is constant has zero Lebesgue measure. Let $a\in[0,|\holdall|]$ be a positive real number that is bounded by the Lebesgue measure of the domain $\holdall$. Then, there exists a $c\in\mathbb{R}$ such that
\begin{equation*}
	\int_{\holdall}\chi_{\{f+c\leq0\}}(x)\diff x=a.
\end{equation*}
\end{lemma}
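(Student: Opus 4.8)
The plan is to reduce the statement to an application of the intermediate value theorem for the \emph{distribution function} of $f$. Define $G:\R\to[0,|D|]$ by
\begin{equation*}
	G(t) = \left\lvert \{ x\in D : f(x)\leq t \} \right\rvert = \int_D \chi_{f\leq t}(x)\diff x.
\end{equation*}
Since $\chi_{f+c\leq0}=\chi_{f\leq-c}$, finding a $c$ with $\int_D\chi_{f+c\leq0}\diff x=a$ is equivalent to finding $t=-c$ with $G(t)=a$. Thus it suffices to show that $G$ attains every value in $[0,|D|]$.

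First I would record the elementary properties. Monotonicity is immediate: $t_1\leq t_2$ gives $\{f\leq t_1\}\subseteq\{f\leq t_2\}$, hence $G(t_1)\leq G(t_2)$. For the endpoints, boundedness of $f$ provides some $M>0$ with $|f|\leq M$; then $\{f\leq t\}=\emptyset$ for $t<-M$ and $\{f\leq t\}=D$ for $t\geq M$, so $G(-M)=0$ and $G(M)=|D|$. In particular $a\in[G(-M),G(M)]$.

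The key step, and the place where the hypothesis on the level sets enters, is the continuity of $G$. I would establish this by computing the one-sided limits through the continuity of the finite measure $|\cdot|$ along monotone sequences of sets. For $t_n\downarrow t$ one has $\bigcap_n\{f\leq t_n\}=\{f\leq t\}$, so continuity from above yields $G(t_n)\to G(t)$, i.e.\ right-continuity. For $t_n\uparrow t$ one has $\bigcup_n\{f\leq t_n\}=\{f<t\}$, so continuity from below yields $G(t_n)\to|\{f<t\}|=G(t)-|\{f=t\}|$. Hence the only possible discontinuities of $G$ are jumps of height $|\{f=t\}|$, the measure of the level set of $f$ at height $t$. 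The assumption that $f$ is not constant on any set of positive measure means precisely that $|\{f=t\}|=0$ for every $t$, so all these jumps vanish and $G$ is continuous on $\R$.

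Finally, applying the intermediate value theorem to the continuous function $G$ on $[-M,M]$ with $G(-M)=0\leq a\leq|D|=G(M)$ produces a point $t^\ast\in[-M,M]$ with $G(t^\ast)=a$; setting $c=-t^\ast$ gives $\int_D\chi_{f+c\leq0}\diff x=G(t^\ast)=a$, as required. The only genuine obstacle is the continuity argument: one must identify the jump of $G$ with the measure of a level set and then invoke the measure-zero hypothesis to eliminate it, since without this hypothesis $G$ could jump over the target value $a$ and no admissible $c$ would exist.
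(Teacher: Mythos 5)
Your proof is correct, and it reaches the intermediate value theorem by a genuinely different route than the paper for the one nontrivial step, the continuity of the sublevel-set measure. The paper works with $g(c)=\lvert\{f+c\leq 0\}\rvert$ and never computes one-sided limits: it shows that the strict sublevel measure $\underline{g}$ and the strict superlevel measure $\overline{g}$ are each lower semicontinuous (pointwise lower semicontinuity of $c\mapsto\chi_{f+c<0}(x)$ plus Fatou's lemma), uses the zero-measure hypothesis to identify $g=\underline{g}$, and then exploits the complementarity $\underline{g}+\overline{g}=\lvert D\rvert$: two lower semicontinuous functions summing to a constant must both be continuous. You instead run the classical distribution-function argument: right-continuity of $G(t)=\lvert\{f\leq t\}\rvert$ from continuity from above of the finite measure, left limits from continuity from below, which identifies the jump at $t$ as exactly $\lvert\{f=t\}\rvert$, annihilated by the hypothesis. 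Your version is more elementary (no Fatou, no semicontinuity bookkeeping) and makes completely transparent where the level-set hypothesis enters and why it is sharp -- without it $G$ could jump over $a$ -- whereas the paper's complementarity trick buys a proof that never needs the monotone-limit set identities $\bigcap_n\{f\leq t_n\}=\{f\leq t\}$ and $\bigcup_n\{f\leq t_n\}=\{f<t\}$. One cosmetic slip: the emptiness of $\{f\leq t\}$ for $t<-M$ gives $G(t)=0$ only for $t<-M$, not at $t=-M$ itself; but $G(-M)=\lvert\{f=-M\}\rvert=0$ by the level-set hypothesis (or by the continuity you just proved), so the application of the intermediate value theorem on $[-M,M]$ stands.
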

\begin{proof}
We define the auxiliary function $g:\mathbb{R}\rightarrow\mathbb{R}$ by
\begin{equation*}
	g(c)=\int_{\holdall}\chi_{\{f+c\leq0\}}(x)\diff x.
\end{equation*}
As the domain $\holdall$ is bounded, we have that $g(c)\in[0,|\holdall|]$ for all $c\in\mathbb{R}$. Additionally, since $f$ is bounded, there exist a $\overline{c}=\max\{-\min_{x\in \holdall}f(x),0\}$ and a $\underline{c}=\min\{-\max_{x\in \holdall}f(x),0\}$ such that $g(c)=0$ for all $c\geq\overline{c}$ and $g(c)=|\holdall|$ for all $c\leq\underline{c}$, respectively. Therefore, it is sufficient to show that for all $a\in[0,|\holdall|]$ there exists a $c\in[\underline{c},\overline{c}]$ such that $g(c)=a$ holds. We define
\begin{equation*}
	g(c)=\int_{\holdall}\chi_{\{f+c\leq0\}}(x)\diff x = \int_{\holdall}\chi_{\{f+c<0\}}(x)\diff x + \int_{\holdall}\chi_{\{f+c=0\}}(x)\diff x = \underline{g}(c) + g_0(c),
\end{equation*}
where
\begin{equation*}
	\underline{g}(c) = \int_{\holdall}\chi_{\{f+c<0\}}(x)\diff x, \quad g_0(c) = \int_{\holdall}\chi_{\{f+c=0\}}(x)\diff x.
\end{equation*}
Then, with the assumption $g_0(c)=0$ for all $c\in[\underline{c},\overline{c}]$, we arrive at
\begin{equation*}
	g(c) = \underline{g}(c) + g_0(c) = \underline{g}(c).
\end{equation*}
Furthermore, we set
\begin{equation*}
	\overline{g}(c)=\int_{\holdall}\chi_{\{f+c>0\}}(x)\diff x.
\end{equation*}

We show that $\underline{g}$ and $\overline{g}$ are lower semicontinuous. For that let $x \in \holdall$ be fixed and let $c_n\subset [\underline{c},\overline{c}]$ be a sequence that converges to an arbitrary $c^\ast\in [\underline{c},\overline{c}]$. We start by showing that the map $c \mapsto \chi_{\{f+c<0\}}(x)$ is lower semicontinuous. Here, we distinguish between two cases: If we have $f(x) + c^\ast \geq 0$, then the lower semicontinuity follows directly
\begin{equation*}
	\chi_{\{f+c^\ast < 0\}}(x) = 0 \leq \liminf_{n \rightarrow \infty} \chi_{\{f+c_n < 0\}}(x).
\end{equation*}
For the remaining case we have $f(x) + c^\ast < 0$. Thus, there exists a suitable $\epsilon>0$ such that $f(x) + c^\ast < -\epsilon$. Furthermore, as $c_n$ converges to $c^\ast$, there exists a $N\in\mathbb{N}$ such that for all $\tilde{n}>N$ there holds $\left| c_{\tilde{n}} - c^\ast \right| \leq \frac{\epsilon}{2}$. Then, we arrive at
\begin{equation*}
	f(x) + c_{\tilde{n}} = f(x) + c^\ast - c^\ast + c_{\tilde{n}} < -\epsilon + \frac{\epsilon}{2} < 0,
\end{equation*}
which then implies $\chi_{\{f+c^\ast < 0\}}(x) = 1 = \lim_{n\rightarrow \infty} \chi_{\{f+c_n < 0\}}(x)$. Therefore, we showed that 
\begin{equation*}
	\chi_{\{f+c^\ast < 0\}}(x) \leq \liminf_{n \rightarrow \infty} \chi_{\{f+c_n < 0\}}(x)
\end{equation*}
holds for all $x \in \holdall$. We use this lower semicontinuity and the Lemma of Fatou to proof the lower semicontinuity of $\underline{g}$
\begin{equation*}
	\underline{g}(c^\ast) = \int_{\holdall}\chi_{\{f+c^\ast < 0\}}(x)\diff x \leq \int_{\holdall} \liminf_{n \rightarrow \infty} \chi_{\{f+c_n < 0\}}(x)\diff x \leq \liminf_{n \rightarrow \infty} \int_{\holdall} \chi_{\{f+c_n < 0\}}(x)\diff x = \liminf_{n \rightarrow \infty} \underline{g}(c_n).
\end{equation*}
Analogous computations yield that $\overline{g}$ is lower semicontinuous as well.

Next, we partition the continuous function $h(c)=|\holdall|$ for all $c\in[\underline{c},\overline{c}]$ as
\begin{equation*}
	h(c) = \underline{g}(c) + \overline{g}(c).
\end{equation*}
We show that the continuity of $h$ and the lower semicontinuity of both $\underline{g}$ and $\overline{g}$ already imply the continuity of $\underline{g}$ and $\overline{g}$. Due to the lower semicontinuity of $\underline{g}$ and $\overline{g}$, the functions $-\underline{g}$ and $-\overline{g}$ are upper semicontinuous. Then, we write
\begin{equation*}
	\overline{g} = \left(\overline{g} + \underline{g}\right) + \left(-\underline{g}\right), \quad \underline{g} = \left(\overline{g} + \underline{g}\right) + (-\overline{g}),
\end{equation*}
which implies the upper semicontinuity of $\underline{g}$ and $\overline{g}$. Thus, we conclude that $\underline{g}$ and $\overline{g}$ are indeed continuous. Finally, we apply the intermediate value theorem and arrive at the claim.

\end{proof}

\bibliographystyle{siamplain}
\bibliography{literature_db.bib}

\end{document}